\documentclass[11pt]{article}
\usepackage{fullpage}
\usepackage{authblk}
\usepackage{graphicx}
\usepackage{amsmath}
\usepackage{amsfonts}
\usepackage{amssymb}
\usepackage{amsthm}
\usepackage{mathrsfs}
\usepackage{enumitem}
\usepackage{caption}
\usepackage{subcaption}
\captionsetup{compatibility=false} 

\newtheorem{theorem}{Theorem}
\newtheorem{lemma}[theorem]{Lemma}

\newtheorem{corollary}[theorem]{Corollary}

\numberwithin{theorem}{section}

\makeatletter
\renewcommand\section{%
  \@startsection{section}{1}
                {\z@}%
                {-3.5ex \@plus -1ex \@minus -.2ex}%
                {2.3ex \@plus.2ex}%
                {\large\bfseries}
}
\renewcommand\subsection{%
  \@startsection{subsection}{2}
                {\z@}%
                {-3.25ex\@plus -1ex \@minus -.2ex}%
                {1sp}
                {\normalsize\bfseries}
}
\renewcommand\subsubsection{%
  \@startsection{subsubsection}{3}
                {\z@}%
                {-3.25ex\@plus -1ex \@minus -.2ex}%
                {1sp}
                {\normalfont\normalsize}
}
\makeatother

\marginparwidth 0pt \oddsidemargin 0pt \evensidemargin 0pt
\topmargin 30pt \textheight 21.0 truecm \textwidth 16.0 truecm

\title{{\Large\bf  On Kernel Mengerian Orientations of Line Multigraphs}}
\date{}
\author{Han Xiao \footnote{hxiao.math@connect.hku.hk}}

\affil{Department of Mathematics,

The University of Hong Kong,

Hong Kong, China
}

\begin{document}

\numberwithin{equation}{section}

\maketitle

\openup 1.2\jot

\hfill

\begin{abstract}
We present a polyhedral description of kernels in orientations of line multigraphs.
Given a digraph $D$, let $FK(D)$ denote the fractional kernel polytope defined on $D$,
and let $\sigma(D)$ denote the linear system defining $FK(D)$.
A digraph $D$ is
called kernel perfect if every induced subdigraph $D'$ has a kernel,
called kernel ideal if $FK(D')$ is integral for each induced subdigraph $D'$,
and called kernel Mengerian if $\sigma(D')$ is TDI for each induced subdigraph $D'$.
We show that an orientation of line multigraph is kernel perfect iff it is kernel ideal iff it is kernel Mengerian.
Our result strengthens the theorem of Borodin \textit{et al.} \cite{BoroKost98} on kernel perfect digraphs and generalizes the theorem of Kir\'{a}ly and Pap \cite{KiraPap08} on the stable matching problem.

\hfill

\hfill

\noindent\textbf{AMS subject classifications:} 90C10, 90C27, 90C57.

\noindent\textbf{Key words:} kernel, stable matching, line graph, polytope, total dual integrality.
\end{abstract}

\newpage
\section{Introduction}
\label{intro}
A graph is called \textit{simple} if it contains neither loops nor parallel edges, and is called a \textit{multigraph} if it has parallel edges.
A \textit{simple} digraph is an orientation of simple graph.
A \textit{multi-digraph} is an orientation of multigraph.

Let $G$ be a graph.
The \textit{line graph} of $G$, denoted by $L(G)$, is a graph such that: each vertex of $L(G)$ corresponds to an edge of $G$, and two vertices of $L(G)$ are adjacent if and only if they are incident as edges in $G$.
We call $L(G)$ the \textit{line multigraph} of $G$ if any two vertices of $L(G)$ are connected by as many edges as the number of their common ends in $G$.
We call $G$ a \textit{root} of $L(G)$.

Let $D=(V,A)$ be a digraph.
For $U\subseteq V$, we call $U$ an \textit{independent} set of $D$ if no two vertices in $U$ are connected by an arc, call $U$ a \textit{dominating} set of $D$ if for each vertex $v\not\in U$, there is an arc from $v$ to $U$, and call $U$ a \textit{kernel} of $D$ if it is both independent and dominating.
We call $D$ \textit{kernel perfect} if each of its induced subdigraphs has a kernel.
A \textit{clique} of $D$ is a subset of $V$ such that any two vertices are connected by an arc. We call $D$ \textit{clique-acyclic} if for each clique of $D$ the induced subdigraph of one-way arc is acyclic, and call $D$ \textit{good} if it is clique-acyclic and every directed odd cycle has a (pseudo-)chord\footnote{A pseudo-chord is an arc $(v_i,v_{i-1})$ in a directed cycle $v_1 v_2\ldots v_l v_1$.}.

\begin{theorem}[Borodin \textit{et al.} \cite{BoroKost98}]
\label{thm:BoroKost98}
Let $G$ be a line multigraph. The orientation $D$ of $G$ is kernel perfect if and only if it is good.
\end{theorem}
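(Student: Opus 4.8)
The plan is to prove the two implications separately; the implication ``kernel perfect $\Rightarrow$ good'' holds for \emph{every} digraph, so the line-multigraph hypothesis is needed only for the converse. For the easy direction I would prove the contrapositive, exhibiting a kernelless induced subdigraph whenever $D$ fails to be good. Suppose first that some clique $K$ of $D$ carries a directed cycle $C=v_1v_2\cdots v_kv_1$ in its one-way arcs (necessarily $k\ge 3$). Then $D[V(C)]$ is again a clique, and in a clique every independent set is a singleton, so a kernel of $D[V(C)]$ would be a single vertex $v_i$ with an arc into it from every other vertex of $V(C)$; but $v_i\to v_{i+1}$ is one-way, so no arc $v_{i+1}\to v_i$ exists, and no such $v_i$ can exist. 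Suppose instead that $D$ contains a directed odd cycle $C=v_1v_2\cdots v_{2m+1}v_1$ with no chord and no pseudo-chord. By definition the only arcs of $D$ among $V(C)$ are then the cycle arcs $v_i\to v_{i+1}$, so $D[V(C)]$ is a directed odd cycle; a kernel of it would be an independent set meeting exactly one of each pair $\{v_i,v_{i+1}\}$, i.e.\ an alternating subset of an odd cycle, which does not exist. In both cases $D$ is not kernel perfect.

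For sufficiency, I would first record that goodness is inherited by induced subdigraphs (clique-acyclicity and the odd-cycle-chord condition are clearly induced-hereditary) and that an induced subgraph of a line multigraph is again a line multigraph: if $D$ is an orientation of $L(G)$ and $D'=D[F]$ for an edge set $F\subseteq E(G)$, then the underlying multigraph of $D'$ is $L(G_F)$ for $G_F=(V(G),F)$. Hence it suffices to prove that every good orientation $D$ of a line multigraph $L(G)$ has a kernel. I would then work in the root $G$: the edges incident to a fixed vertex $v$ of $G$ form a clique of $L(G)$, so clique-acyclicity says exactly that at each $v$ the one-way arcs among the edges at $v$ form an acyclic digraph, i.e.\ a preference preorder at $v$ (ties recorded as $2$-cycles). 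Under this dictionary a kernel of $D$ is precisely a stable matching of the resulting preference system on $G$: a matching $M$ such that every edge $e\notin M$ is dominated at one of its ends by an edge of $M$ (no blocking end).

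To produce the stable matching I would argue by minimal counterexample combined with a Gale--Shapley / deferred-acceptance proposal process, in the spirit of Tan's treatment of stable matchings in general graphs. Take $D$ good on $L(G)$ with no kernel and $|E(G)|$ minimum; then $G$ may be taken connected, and every proper induced subdigraph has a kernel. Clique-acyclicity gives at any vertex $v$ a clique-sink edge $s_v$ (dominated within the star clique by every other edge at $v$); one tentatively adds such an edge to $M$, deletes the edges at its two ends, invokes the inductive kernel on the remainder, and tries to glue. The gluing fails only when the partner end $w$ strictly prefers another edge at $w$, which triggers a rejection and a further proposal, producing a proposal chain. The combinatorial heart of the argument is to show that in a good orientation of a line multigraph such a chain cannot run forever: an endless chain traces a closed edge-walk in $G$, and analysing its parity, the odd case forces a directed odd cycle in $L(G)$ all of whose chords and pseudo-chords are excluded, contradicting goodness, while the even case either completes a stable matching or isolates a smaller kernelless good orientation, contradicting minimality.

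I expect the main obstacle to be exactly this last step: establishing the line-multigraph analogue of Tan's ``no odd party'' theorem, and in particular verifying that the single-chord hypothesis ``every directed odd cycle has a chord or a pseudo-chord''---weaker than the two-chord conditions known to force kernel-perfectness in arbitrary digraphs---is enough once the Krausz clique structure of $L(G)$ (every maximal clique is a star clique or a triangle clique) is brought to bear. Controlling parallel edges and ties ($2$-cycles inside star cliques) adds bookkeeping but no essential new difficulty.
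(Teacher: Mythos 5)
This statement is quoted by the paper as a known theorem of Borodin, Kostochka, Raspaud and Sopena; the paper gives no proof of its own, so there is nothing to compare against except your argument's internal completeness.

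Your necessity direction (kernel perfect $\Rightarrow$ good) is correct and complete, and indeed uses nothing about line multigraphs: restricting to the vertex set of a one-way directed cycle inside a clique leaves a clique whose only candidate kernels are singletons, none of which is dominated into by its cycle-successor; and restricting to a chordless, pseudo-chordless directed odd cycle leaves a pure directed odd cycle, whose kernel would have to alternate around an odd circuit. (This matches the reduction the paper itself uses later in its proof of $(iii)\Rightarrow(i)$.)

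The sufficiency direction, however, is a plan rather than a proof, and the gap sits exactly where you say it does. Heredity of goodness and of the line-multigraph property, and the translation of kernels of $D$ into stable matchings of a preference system on the root $G$ (with clique-acyclicity supplying acyclic preferences at each vertex, possibly with ties from two-way arcs), are all fine. But the entire content of the theorem is the claim that a good orientation forces the deferred-acceptance/rotation process to terminate --- the line-multigraph analogue of Tan's ``no odd party'' theorem under the single hypothesis that every directed odd cycle has a chord or pseudo-chord. You state this as the step to be verified but do not verify it: you do not show that an endless proposal chain actually closes into a directed odd cycle of $L(G)$, nor that all of that cycle's chords and pseudo-chords are excluded by the dynamics of the rejections, nor how ties and parallel edges are absorbed. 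Without that argument the proof does not go through. For what it is worth, the paper's own machinery yields this direction by a different, polyhedral route: translate $\sigma(D)$ into the Rothblum system $\pi(H,\prec)$, replace parallel edges by the Cechl\'arov\'a--Fleiner gadget to get a simple preference system with no odd cyclically-preferred cycles, and invoke the perturbation argument of Lemma 4.1 to conclude that the (always nonempty) fractional stable matching polytope is integral, hence a stable matching, hence a kernel, exists. If you want a self-contained combinatorial proof you must supply the parity analysis of proposal chains; otherwise the honest course is to cite Borodin \emph{et al.} for this direction, as the paper does.
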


A subset $P$ of $\mathbb{R}^n$ is called a \textit{polytope} if it is the convex hull of finitely many vectors in $\mathbb{R}^n$. A point $x$ in $P$ is called a \textit{vertex} or an \textit{extreme point} if there exist no distinct points $y$ and $z$ in $P$ such that $x=\alpha y+ (1-\alpha)z$ for $0<\alpha<1$. It is well known that $P$ is the convex hull of its vertices, and that there exists a linear system $Ax\leq b$ such that $P=\{x\in\mathbb{R}^n:Ax\leq b\}$.
We say $P$ is $1/k$-\textit{integral} if its vertices are $1/k$-integral vectors, where $k\in\mathbb{N}$. By a theorem in linear programming, $P$ is $1/k$-integral if and only if $\max\{c^T x:Ax\leq b\}$ has an optimal $1/k$-integral solution for every integral vector $c$ for which the optimum is finite. If, instead, $\max\{c^T x:Ax\leq b\}$ has a dual optimal $1/k$-integral solution, we say $Ax\leq b$ is \textit{totally dual $1/k$-integral} (TDI$/k$).
It is easy to verify that $Ax\leq b$ is TDI$/k$ if and only if $Bx\leq b$ is TDI, where $B=A/k$ and $k\in\mathbb{N}$. Thus from a theorem of Edmonds and Giles \cite{EdmoGile77}, we deduce that if $Ax\leq b$ is TDI$/k$ and $b$ is integral, then $P=\{x\in\mathbb{R}^n:Ax\leq b\}$ is $1/k$-integral.

Let $\sigma(D)$ denote the linear system consisting of the following inequalities:
\begin{alignat}{4}
x(v)+x(N^+(v)) &\geq 1 &\qquad &\forall ~ v~&\in V, \label{domination constraints}\\
x(Q)&\leq 1 &\qquad &\forall ~ Q &\in \mathcal{Q}, \label{independence constraints}\\
x(v) &\geq 0 &\qquad &\forall ~ v~&\in V, \label{vertex nonnegativity}
\end{alignat}
where $x(U)=\sum_{u\in U}x(u)$ for any $U\subseteq V$, $N^+(v)$ denotes the set of all out-neighbors of vertex $v$, and $\mathcal{Q}$ denotes the set of all cliques of $D$. Observe that incidence vectors of kernels of $D$ are precisely integral solutions $x\in \mathbb{Z}^A$ to $\sigma(D)$.
The \textit{kernel polytope} of $D$, denoted by $K(D)$, is the convex hull of incidence vectors of all kernels of $D$.  The \textit{fractional kernel polytope} of $D$, denoted by $FK(D)$, is the set of all solutions $x\in \mathbb{R}^A$ to $\sigma(D)$. Clearly, $K(D)\subseteq FK(D)$.
We call $D$ \textit{kernel ideal} if $FK(D')$ is integral for each induced subdigraph $D'$, and \textit{kernel Mengerian} if $\sigma(D')$ is TDI for each induced subdigraph $D'$.

As described in Egres Open \cite{Egres}, the polyhedral description of kernels remains open. Chen \textit{et al.} \cite{ChenChen} attained a polyhedral characterization of kernels by replacing clique constraints $x(Q)\leq 1$ for $Q\in\mathcal{Q}$ with independence constraints $x(u)+x(v)\leq 1$ for $(u,v)\in A$.
In this paper we show that kernels in orientations of line multigraph can be characterized polyhedrally.

\begin{theorem}
\label{thm:main}
Let $D$ be an orientation of a line multigraph. Then the following statements are equivalent:
\begin{enumerate}[label={\emph{(}\roman*\emph{)}}]
	\item $D$ is good;
	\item $D$ is kernel perfect;
	\item $D$ is kernel ideal;
	\item $D$ is kernel Mengerian.
\end{enumerate}
\end{theorem}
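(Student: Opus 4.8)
Of the four implications arranged in a cyclic chain $(i)\Rightarrow(iv)\Rightarrow(iii)\Rightarrow(ii)\Rightarrow(i)$, three are routine and I would clear them first. For $(iv)\Rightarrow(iii)$: the right-hand side of $\sigma(D')$ consists only of $0$'s and $1$'s, so if $\sigma(D')$ is TDI then, by the Edmonds--Giles theorem recalled above, $FK(D')$ is integral. For $(iii)\Rightarrow(ii)$: an integral polytope contains an integral point, and every integral point of $FK(D')$ is, as already observed, the incidence vector of a kernel of $D'$; since this holds for each induced subdigraph, $D$ is kernel perfect. And $(ii)\Rightarrow(i)$ is exactly the forward direction of Theorem~\ref{thm:BoroKost98}. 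Hence the whole substance of Theorem~\ref{thm:main} is the implication $(i)\Rightarrow(iv)$: a good orientation of a line multigraph is kernel Mengerian.

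To prove $(i)\Rightarrow(iv)$ I would first reduce to showing that $\sigma(D)$ itself is TDI for every good orientation $D$ of a line multigraph. Both hypotheses are hereditary under passing to induced subdigraphs: an induced subgraph of a line multigraph $L(G)$ on an edge set $S$ is $L(G_S)$, where $G_S$ is the subgraph of $G$ with edge set $S$, so ``orientation of a line multigraph'' is preserved; and a directed odd cycle, a clique, and a chord or pseudo-chord joining two of its vertices all survive in an induced subdigraph, so ``good'' is preserved. With the reduction in hand, fix an integral objective $c\in\mathbb{Z}^V$ for which $\max\{c^\top x:x\in FK(D)\}$ is finite --- for good $D$ this is every $c$, since $FK(D)$ is then a nonempty polytope contained in $[0,1]^V$ --- and write out the LP dual of \eqref{domination constraints}--\eqref{vertex nonnegativity}: it seeks nonnegative multipliers $z_Q$ on the clique inequalities and $y_v$ on the domination inequalities satisfying $\sum_{Q\ni u}z_Q-y_u-y(N^-(u))\ge c(u)$ for every $u\in V$, minimizing $\sum_Q z_Q-\sum_v y_v$; the goal is an integral optimal $(y,z)$.

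The core is a normalization/uncrossing argument on an optimal dual pair, driven by the clique structure of a line multigraph. Fixing a root $G$, every clique of $L(G)$ is, up to choices of parallel edges, either a star $E_G(v)$ with $v\in V(G)$ or a triangle of $G$, so the clique multipliers $z$ split into star multipliers and triangle multipliers. At each vertex $v$ of $G$, clique-acyclicity says that the one-way arcs inside $E_G(v)$ are acyclic, i.e.\ define a partial preference order on the edges at $v$; this is exactly the feature that, in the bipartite case with no triangles, turns $\sigma(D)$ into Rothblum's stable-marriage system and lets one uncross crossing stars at $v$ while rerouting the domination multipliers $y$ along that order without increasing the objective. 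I would use this to arrange the star multipliers at each vertex of $G$ to be supported on a nested family, show the triangle multipliers can be pushed into a bounded, well-structured set (or eliminated), and argue that the remaining, ``active'' part of the constraint matrix is then totally unimodular, in fact of network type, so that an integral optimum exists. Specializing $G$ to a bipartite graph erases the triangle multipliers and recovers the Kir\'aly--Pap theorem~\cite{KiraPap08}; the statement here is the common generalization, with ``good'' playing the role that bipartiteness of the preference system plays there.

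The uncrossing step is where I expect the real difficulty. Unlike laminar uncrossing for submodular set families, the stars of a line graph do not close up under intersection and union inside the clique family, so one cannot uncross purely within the inequality system: mass on the domination inequalities must be moved at the same time, and one must check vertex by vertex that feasibility of $\sum_{Q\ni u}z_Q-y_u-y(N^-(u))\ge c(u)$ is preserved while a suitable potential (such as $\sum_Q|Q|\,z_Q$ together with a count of crossing pairs) strictly decreases, so that the procedure terminates. The genuinely delicate point is parity: after all crossings are resolved one must still rule out a half-integral optimum, and the obstruction to integrality is precisely an induced directed odd cycle with no chord --- the cyclic-triangle phenomenon --- so this is exactly where the full ``good'' hypothesis, and in particular the (pseudo-)chords of directed odd cycles supplied by Borodin \textit{et al.}'s structural analysis of good orientations of line multigraphs, must be invoked.
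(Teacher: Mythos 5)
Your handling of the three routine implications matches the paper's in substance (the paper proves $(iii)\Rightarrow(i)$ directly and cites Borodin et al.\ for $(i)\Leftrightarrow(ii)$, but these bookkeeping choices are interchangeable), and you correctly isolate $(i)\Rightarrow(iv)$, together with its heredity under induced subdigraphs, as the entire content of the theorem. The gap is that this implication is left as a plan whose central steps are neither executed nor plausible as stated. In particular, the claim that after uncrossing the star multipliers the remaining ``active'' constraint matrix is totally unimodular is the wrong mechanism: integrality of fractional stable matching polytopes is not a TU phenomenon even in the bipartite case --- the rows $x(\varphi(e))\ge 1$ do not form a TU matrix, and on an odd cycle with cyclic preferences the system has a genuinely half-integral vertex --- so integrality must come from the preference structure (absence of odd cycles with cyclic preferences), not from unimodularity of a submatrix. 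You yourself flag the uncrossing, the disposal of the triangle multipliers, and the parity obstruction as ``where the real difficulty is,'' but none of these is resolved, so the theorem is not proved.

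The paper avoids dual uncrossing entirely by a chain of reductions. First, for a good orientation $D$ of $L(H)$ it identifies $FK(D)$ with the fractional stable matching polytope $FSM(H,\prec)$ of a preference system on the root $H$: the stars become the matching constraints, the domination constraints become the stability constraints, and the remaining clique constraints (proper substars and triangles) are shown redundant using the $1/2$-integrality of $FSM$ together with clique-acyclicity. Second, parallel edges are removed by the Cechl\'arov\'a--Fleiner gadget, which preserves the parity of cycles with cyclic preferences; $FSM(H,\prec)$ is a projection of the simple system's polytope, and total dual integrality is pulled back through Fourier--Motzkin elimination (Cook). Third --- and this is the lever your plan is missing --- by Theorem~\ref{thm:ChenDing12}, for a simple preference system TDI of $\pi(G,\prec)$ is \emph{equivalent} to primal integrality $SM(G,\prec)=FSM(G,\prec)$, so it suffices to show that no half-integral point is a vertex when there is no odd cycle with cyclic preferences; this is done by an explicit $\pm\epsilon z$ perturbation supported on the half-integral cycles, with the ``good'' hypothesis entering exactly where you predicted, namely in ruling out odd cycles with cyclic preferences among the tight stability constraints. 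A direct dual argument of the kind you sketch would essentially have to reprove that equivalence from scratch; as written, the proposal establishes only the easy three-quarters of the theorem.
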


The equivalence of $(i)$ and $(ii)$ was established by Borodin \textit{et al.} \cite{BoroKost98} (Maffray \cite{Maff92} proved the case when $D$ is perfect). Kir\'{a}ly and Pap \cite{KiraPap08} proved Theorem \ref{thm:main} for the case when the root of $D$ is bipartite.
Our result strengthens the theorem of Borodin \textit{et al.} \cite{BoroKost98} and generalizes the theorem of Kir\'{a}ly and Pap \cite{KiraPap08} to line multigraphs.

\section{Preliminaries}
\label{pre}
Kernels are closely related to stable matchings.
Before proceeding, we introduce some notations and some theorems of the stable matching problem first.
Let $G=(V,E)$ be a graph.
For $v\in V$, let $\delta(v)$ denote the set of edges incident to $v$ and $\prec_v$ be a strict linear order on $\delta(v)$.
We call $\prec_v$ the \emph{preference} of $v$, and for edges $e$ and $f$ incident to $v$ we say $v$ \emph{prefers} $e$ to $f$ or $e$ \emph{dominates} $f$ if $e\prec_v f$.
Let $\prec$ be the set of linear order $\prec_v$ for $v\in V$.
We call the pair $(G,\prec)$ \emph{preference system}, and call $(G,\prec)$ \emph{simple} if $G$ is simple.
For $e\in E$, let $\varphi(e)$ denote the set consisting of $e$ itself and edges that dominate $e$ in $(G,\prec)$,
and let $\varphi_v(e)$ denote the set of edges that dominate $e$ at vertex $v$ in $(G,\prec)$.
Given a matching $M$ in $G$, we call $M$ \emph{stable} in $(G,\prec)$ if every edge of $G$ is either in $M$ or is dominated by some edge in $M$.

Let $\pi(G,\prec)$ denote the linear system consisting of the following linear inequalities:
\begin{alignat}{3}
x(\varphi(e)) &\geq 1 &\qquad \forall ~e &\in E,\label{stability constraints}\\
x(\delta(v)) &\leq 1 &\qquad \forall ~v &\in V,\label{matching constraints}\\
x(e) &\geq 0 &\qquad \forall ~e &\in E.\label{edge nonnegativity}
\end{alignat}
As observed by Abeledo and Rothblum \cite{AbelRoth94}, incidence vectors of stable matchings of $(G,\prec)$ are precisely integral solutions $x\in \mathbb{Z}^E$ to $\pi(G,\prec)$.
The \textit{stable matching polytope}, denoted by $SM(G,\prec)$, is the convex hull of incidence vectors of all stable matchings of $(G,\prec)$. The \textit{fractional stable matching polytope}, denoted by $FSM(G,\prec)$, is the set of all solutions $x\in \mathbb{R}^E$ to $\pi(G,\prec)$. Clearly, $SM(G,\prec)\subseteq FSM(G,\prec)$.

\begin{theorem}[Rothblum \cite{Roth92}]
\label{thm:Roth92}
Let $(G,\prec)$ be a simple preference system. If $G$ is bipartite, then
$SM(G,\prec)=FSM(G,\prec)$.
\end{theorem}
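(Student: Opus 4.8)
The plan is to establish the two inclusions separately. That $SM(G,\prec)\subseteq FSM(G,\prec)$ is immediate: the incidence vector of a stable matching $M$ obeys \eqref{matching constraints} since $M$ is a matching, obeys \eqref{edge nonnegativity} trivially, and obeys \eqref{stability constraints} because stability of $M$ means $\varphi(e)\cap M\neq\emptyset$ for every $e\in E$, so $x(\varphi(e))\geq 1$. The substance is the reverse inclusion $FSM(G,\prec)\subseteq SM(G,\prec)$, which is equivalent to showing that $FSM(G,\prec)$ is an integral polytope, i.e.\ that every $x\in FSM(G,\prec)$ is a convex combination of incidence vectors of stable matchings.

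I would prove the reverse inclusion by induction, peeling off a ``best edge''. Pick any vertex $v^{*}$, let $e^{*}=v^{*}u$ be the $\prec_{v^{*}}$-minimum edge of $\delta(v^{*})$, and set $\alpha=x(e^{*})$; note that $\varphi_{v^{*}}(e^{*})=\emptyset$, so the stability constraint for $e^{*}$ reads $x(e^{*})+x(\varphi_{u}(e^{*}))\geq 1$. If $\alpha=0$, this forces $x(\varphi_{u}(e^{*}))=1$, so all of $x(\delta(u))$ sits on edges of $\varphi_{u}(e^{*})$; deleting $e^{*}$ keeps $x$ in $FSM(G-e^{*},\prec)$, and in every stable matching arising from the inductive decomposition of that point $u$ is matched via an edge of $\varphi_{u}(e^{*})$, so $e^{*}$ is dominated and the matching is stable in $G$. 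If $\alpha=1$, then $v^{*}$ and $u$ are saturated by $e^{*}$ (all other incident edges carry weight $0$); delete both vertices, decompose the restriction of $x$ inductively, and re-insert $e^{*}$ into every matching of the decomposition — a dual argument shows stability in $G$: an edge at $u$ that $u$ ranks below $e^{*}$ is dominated at $u$, and an edge $e'=m'u$ that $u$ ranks above $e^{*}$ satisfies $x(\varphi_{m'}(e'))=1$, so $m'$ is saturated above $e'$ and is matched above $e'$ in every component.

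The case $0<\alpha<1$ is the crux, and I expect it to be the main obstacle. Here one wants $x=\alpha x^{1}+(1-\alpha)x^{0}$ with $x^{1},x^{0}\in FSM(G,\prec)$, $x^{1}(e^{*})=1$, $x^{0}(e^{*})=0$, and then to recurse on each (both have strictly fewer fractional coordinates). Building $x^{1}$ naively — route $v^{*}$ and $u$ entirely onto $e^{*}$ and leave $x$ untouched elsewhere — fails, because emptying $\delta(u)$ of the weight $u$ placed on edges it prefers to $e^{*}$ can destroy the stability of still-less-preferred edges at their far endpoints. The fix is a cascading readjustment: raising $e^{*}$ to weight $1$ forces, for each edge $e'=m'u$ with $e'\prec_{u}e^{*}$, a compensating increase of the weight $m'$ puts on edges it prefers to $e'$, which in turn propagates; the entire redistribution must be shown to close up consistently, to stay inside $FSM(G,\prec)$, and to keep $\alpha x^{1}\leq x$, and this is exactly where bipartiteness is indispensable — the analogous statement fails once odd cycles are present, which is the phenomenon the pseudo-chord hypothesis in Theorem~\ref{thm:main} is designed to rule out. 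Two alternative finishes are worth noting. One is a threshold decomposition: orienting $V=A\cup B$, for $t\in(0,1)$ let each $a\in A$ choose the edge whose cumulative $x$-weight along $\prec_{a}$ contains $t$ and each $b\in B$ the edge whose cumulative weight contains $1-t$, and let $N_{t}$ be the set of edges chosen by both ends; one checks $N_{t}$ is a matching and, using $x(\varphi(e))\geq 1$, a stable one, so $\int_{0}^{1}\chi^{N_{t}}\,dt\in SM(G,\prec)$ — but this integral equals $x$ only when every edge of the support has a tight stability constraint, leaving the same ``loose-edge'' gap. The other is structural: the stable matchings of a bipartite $(G,\prec)$ form a distributive lattice with an associated rotation poset, and $FSM(G,\prec)$ can be identified with that poset's order polytope, whose vertices are manifestly $0/1$; this trades the combinatorial readjustment above for the machinery of rotations.
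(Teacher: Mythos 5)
Your proposal does not close the argument: the case $0<\alpha<1$ is the entire content of Rothblum's theorem, and you explicitly leave it as a described plan rather than a proof. This case cannot be dodged by a clever choice of $v^{*}$: for a $4$-cycle with cyclic preferences, the all-$1/2$ point lies in $FSM(G,\prec)$ (it is the midpoint of the two stable perfect matchings), and there the $\prec_{v}$-minimum edge of \emph{every} vertex carries weight $1/2$, so your induction enters the fractional branch immediately. In that branch you correctly diagnose that the naive construction of $x^{1}$ fails and that a cascading readjustment is needed, but the assertion that ``the entire redistribution must be shown to close up consistently, to stay inside $FSM(G,\prec)$, and to keep $\alpha x^{1}\le x$'' is precisely the theorem; no mechanism is given for why the cascade terminates or stays feasible, and no use of bipartiteness is actually made beyond the remark that it must be ``indispensable.'' The two alternative finishes do not rescue this: you yourself note that the threshold decomposition $\int_{0}^{1}\chi^{N_{t}}\,dt$ recovers $x$ only when every support edge has a tight stability constraint (the intersection of the two selection intervals for $e=ab$ has length $x(e)$ exactly when $x(\varphi(e))=1$), and the rotation-poset/order-polytope route is invoked by name without constructing the poset or proving the identification. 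The boundary cases $\alpha=0$ and $\alpha=1$ are handled correctly (the complementary-slackness argument forcing $u$ to be matched inside $\varphi_{u}(e^{*})$ in every matching of the decomposition is sound), but they are the easy part.

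For what it is worth, the paper does not prove this statement either: Theorem~\ref{thm:Roth92} is imported from Rothblum \cite{Roth92} as a known result, so there is no internal proof to compare against. If you want a self-contained argument, the cleanest elementary routes are Rothblum's original LP argument showing every vertex of $FSM(G,\prec)$ is integral, or the Teo--Sethuraman rounding applied after first reducing to points where all support constraints are tight; either way the fractional case must be confronted head-on rather than deferred.
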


\begin{theorem}[Kir\'{a}ly and Pap \cite{KiraPap08}]
\label{thm:KiraPap08}
Let $(G,\prec)$ be a simple preference system. If $G$ is bipartite, then $\pi(G,\prec)$ is totally dual integral.
\end{theorem}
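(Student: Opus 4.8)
The plan is to use the Hilbert-basis characterisation of total dual integrality (Giles--Pulleyblank): $\pi(G,\prec)$ is TDI if and only if, for every face $F$ of $FSM(G,\prec)$, the coefficient vectors of those inequalities among \eqref{stability constraints}--\eqref{edge nonnegativity} that hold with equality throughout $F$ form a Hilbert basis of the cone they generate. Note that $FSM(G,\prec)$ is a polytope, since $x(e)\ge 0$ together with $x(\delta(v))\le 1$ forces $x\in[0,1]^E$, and it is nonempty because $(G,\prec)$ has a stable matching (Gale--Shapley); every face arises by fixing a set $S\subseteq E$ of tight stability constraints, a set $T\subseteq V$ of tight matching constraints, and a set $Z\subseteq E$ of edges with $x(e)=0$, and the active coefficient vectors are then, up to sign, the incidence vectors of the sets $\varphi(e)$ with $e\in S$, the incidence vectors of the sets $\delta(v)$ with $v\in T$, and the unit vectors at the edges of $Z$.

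The engine is the \emph{staircase} structure of the sets $\varphi$: at a fixed vertex $v$, ordering $\delta(v)$ by $\prec_v$, the sets $\varphi_v(e)=\{f\in\delta(v):f\prec_v e\}$ for $e\in\delta(v)$ form a chain, so inside the columns indexed by $\delta(v)$ the stability constraints contribute an interval matrix, i.e.\ a network matrix of a directed path. The steps are: (i) prune $S$ to a subfamily $S'$ for which $\{\varphi(e):e\in S'\}$ generates the same cone as $\{\varphi(e):e\in S\}$ (legitimate because if a subfamily generating the same cone is a Hilbert basis then so is the full active family), chosen so that at each vertex the surviving $\varphi_v(e)$ still form a subchain; (ii) invoke the bipartition $V=X\sqcup Y$, orient every edge from $X$ to $Y$, and splice the path-digraphs attached to the vertices of $X$ with those attached to the vertices of $Y$ into a single digraph $H$ equipped with a spanning tree $T_H$, so arranged that each surviving stability constraint, each tight matching constraint, and each unit vector in $Z$ is the signed incidence vector of the fundamental circuit of a non-tree arc of $H$ with respect to $T_H$; (iii) conclude that the active submatrix is a network matrix, hence totally unimodular, hence that its rows form a Hilbert basis of the cone they generate --- which is the cone of the original active family. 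This establishes TDI of $\pi(G,\prec)$. (An alternative route with the same crux: take a dual-optimal triple $(\alpha,\beta,\gamma)$ lexicographically least in a suitable order and show that a fractional coordinate can always be removed by transferring dual mass along an even alternating walk supplied by bipartiteness.)

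The main obstacle is step (ii), the construction of $H$: every edge $e=xy$ lies simultaneously on the path-digraph of $x$ and on that of $y$, so the preference orders $\prec_x$ and $\prec_y$ must be reconciled along $e$, and it is precisely the two-colouring $V=X\sqcup Y$ that allows a globally consistent orientation --- a non-bipartite root admits an odd alternating cycle of dominations that obstructs this, mirroring the failure of Theorem~\ref{thm:Roth92}. A secondary difficulty, in step (i), is to handle stability constraints whose support $\varphi(e)$ reaches the least-preferred edge at an endpoint, where it must be coordinated with a saturated vertex (a tight matching constraint) or a forced-zero edge, and to verify that the pruned family generates the full active cone rather than a proper subcone.
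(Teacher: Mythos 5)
This theorem is quoted by the paper from Kir\'{a}ly and Pap's work and is not proved here, so there is no internal proof to compare against; the question is whether your outline stands on its own, and it does not. The logical frame is fine (Giles--Pulleyblank reduces TDI to showing that, on each face, the active rows of \eqref{stability constraints}--\eqref{edge nonnegativity} form a Hilbert basis, and the rows of a totally unimodular matrix do form a Hilbert basis of the cone they generate), but the two steps that carry all the content are asserted rather than performed. In step (i) you give no rule for which stability rows to discard and no argument that the surviving family generates the same cone --- you yourself flag this as an unresolved ``secondary difficulty.'' In step (ii) you describe the properties the digraph $H$ and spanning tree $T_H$ would need to have, but construct neither; you explicitly call this ``the main obstacle.'' A proof that names its own main obstacle without overcoming it is a plan, not a proof.

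There is also a substantive reason to doubt that the plan, as stated, can be completed. Each row $\chi^{\varphi(e)}$ for $e=xy$ is the union of two preference prefixes, one in $\delta(x)$ and one in $\delta(y)$; such two-interval matrices are not totally unimodular in general, and indeed if the constraint matrix of $\pi(G,\prec)$ (or its face-restrictions) were routinely a network matrix, both Rothblum's integrality theorem and the Kir\'{a}ly--Pap TDI theorem would be immediate, which they are not --- the system is known not to be box-TDI, so total unimodularity genuinely fails. Your approach therefore aims at a property (TU of a pruned active submatrix on every face) that is strictly stronger than the Hilbert-basis property you actually need, with no evidence that the stronger property holds. A further small inaccuracy: in the Giles--Pulleyblank criterion the active rows must be taken with their actual signs ($-\chi^{\varphi(e)}$, $+\chi^{\delta(v)}$, $-\chi^{e}$ after writing everything as $\leq$), so ``up to sign'' is not a harmless qualifier when one is arguing about the cone they generate. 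The parenthetical alternative (rounding a lexicographically extreme dual solution along alternating walks) is much closer to how this theorem is actually proved in the literature, but as written it is a single sentence and cannot be credited as a proof either.
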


Given a cycle $C=v_1 v_2 \ldots v_l v_1$ in $G$, we call $C$ of \textit{cyclic preferences} in $(G,\prec)$ if
$v_{i-1} v_i \prec_{v_i} v_i v_{i+1}$ for $i=1,2,\ldots,l$
or $v_{i-1} v_i\succ_{v_i} v_i v_{i+1}$ for $i=1,2,\ldots,l$,
where indices are taken modulo $l$.
For $x\in FSM(G,\prec)$, let $E_{\alpha}(x)$ denote the set of all edges with $x(e)=\alpha$ where $\alpha\in\mathbb{R}$ and $E_+(x)$ denote the set of all edges with $x(e)>0$.

\begin{theorem}[Abeledo and Rothblum \cite{AbelRoth94}]
\label{thm:AbelRoth94}
Let $(G,\prec)$ be a simple preference system. Then $FSM(G,\prec)$ is $1/2$-integral. Moreover, for each $1/2$-integral point $x$ in $FSM(G,\prec)$, $E_{1/2}(x)$ consists of vertex disjoint cycles with cyclic preferences.
\end{theorem}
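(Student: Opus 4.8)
The statement has two parts — half-integrality of $FSM(G,\prec)$, and the structure of $E_{1/2}(x)$ — and I would treat them separately. Note first that $0\le x(e)\le x(\delta(u))\le 1$ for every edge $e=uv$, so $FSM(G,\prec)$ is a polytope and "$1/2$-integral" means "every vertex is $1/2$-integral''.

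\emph{Half-integrality via the bipartite double cover.}
Let $G'$ be the graph with vertex set $V\times\{0,1\}$ in which each edge $uv\in E$ gives rise to the two edges $(u,0)(v,1)$ and $(u,1)(v,0)$. Since $G$ is simple, $G'$ is a simple bipartite graph with parts $V\times\{0\}$ and $V\times\{1\}$; transport the preferences by ordering, at $(v,i)$, the edges of $\delta_{G'}((v,i))$ exactly as $\prec_v$ orders the corresponding edges of $\delta(v)$, obtaining a simple preference system $(G',\prec')$. Given $x^*\in FSM(G,\prec)$, form its symmetric lift $\tilde x^*$ with $\tilde x^*((u,0)(v,1))=\tilde x^*((u,1)(v,0))=x^*(uv)$; the matching constraints of $(G',\prec')$ split vertexwise and the stability constraint of either copy of $uv$ evaluates to $x^*(\varphi(uv))$, so $\tilde x^*\in FSM(G',\prec')$. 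By Theorem~\ref{thm:Roth92}, $FSM(G',\prec')=SM(G',\prec')$, so $\tilde x^*=\sum_i\lambda_i\chi^{M_i}$ for stable matchings $M_i$ of $(G',\prec')$. Now apply the averaging projection $p\colon\mathbb{R}^{E(G')}\to\mathbb{R}^E$, $p(y)(uv)=\tfrac12\big(y((u,0)(v,1))+y((u,1)(v,0))\big)$: it is linear with $p(\tilde x^*)=x^*$, and for each stable matching $M$ one has $p(\chi^M)\in FSM(G,\prec)$, because the matching constraint at $v$ is the average of those at $(v,0)$ and $(v,1)$, while the stability constraint of $uv$ is obtained by \emph{adding} the stability constraints of the two copies of $uv$ (both $\ge 1$, and their sum is $2\,p(\chi^M)(\varphi(uv))$). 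Since each $p(\chi^M)$ has entries in $\{0,\tfrac12,1\}$, we get $x^*=\sum_i\lambda_i\,p(\chi^{M_i})$ as a convex combination of half-integral points of $FSM(G,\prec)$; hence every vertex $x^*$ of $FSM(G,\prec)$ coincides with one of them and is half-integral.

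\emph{Structure of $E_{1/2}(x)$.}
Fix a $1/2$-integral $x\in FSM(G,\prec)$. From $x(\delta(v))\le 1$, the multiset of positive values at each vertex is $\emptyset,\{1\},\{\tfrac12\}$, or $\{\tfrac12,\tfrac12\}$, so the subgraph $E_{1/2}(x)$ has maximum degree $2$ and its components are paths or cycles; moreover a vertex incident to exactly one edge of $E_{1/2}(x)$ has all its other incident edges equal to $0$. I would rule out path components. Suppose a component is a path $u_0u_1\cdots u_m$ with edges $g_i=u_{i-1}u_i$, each $x(g_i)=\tfrac12$. Walk along it using stability: for $g_1$, $\tfrac12+x(\varphi_{u_0}(g_1))+x(\varphi_{u_1}(g_1))\ge 1$ with $x(\varphi_{u_0}(g_1))=0$, forcing $x(\varphi_{u_1}(g_1))=\tfrac12$, i.e. $g_2\prec_{u_1}g_1$ (the only positive edge besides $g_1$ at $u_1$ is $g_2$); the stability of $g_2$ then gives $x(\varphi_{u_1}(g_2))=0$ and forces $g_3\prec_{u_2}g_2$; continuing, the stability of $g_m$ yields $\tfrac12+0+0\ge1$, a contradiction (the case $m=1$ is immediate). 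Hence every component of $E_{1/2}(x)$ is a cycle $c_1c_2\cdots c_lc_1$. Running the same bookkeeping on such a cycle with edges $d_i=c_{i-1}c_i$, the stability of $d_i$ becomes $[d_{i-1}\prec_{c_{i-1}}d_i]+[d_{i+1}\prec_{c_i}d_i]\ge 1$; writing $p_i=1$ iff $d_i\prec_{c_i}d_{i+1}$, this forbids $p_{i-1}=0$ with $p_i=1$, so all $p_i$ are equal around the cycle — which is precisely the assertion that the cycle has cyclic preferences.

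\emph{Expected obstacle.}
The delicate part is the bipartite reduction: one must design the double cover so that \emph{both} the lift of a fractional point and the pushforward of an integral stable matching land in the respective fractional stable matching polytopes, and verifying that $p(\chi^M)$ satisfies the stability inequalities of $(G,\prec)$ — obtained by summing the two "parallel'' stability inequalities of $(G',\prec')$ — is the one nonroutine point. The structural part, by contrast, is elementary once one observes that at a half-integral point every stability inequality involving only edges of value $0$ and $\tfrac12$ collapses to a $0/1$ condition on the local preference order.
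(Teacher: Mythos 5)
The paper states this result as a quoted theorem of Abeledo and Rothblum and supplies no proof of its own, so there is no in-paper argument to compare against; judged on its own terms, your proof is correct. The half-integrality half is the standard reduction to the bipartite case: the symmetric lift of $x^*$ to the bipartite double cover lands in $FSM(G',\prec')$, Theorem~\ref{thm:Roth92} decomposes it into stable matchings of $(G',\prec')$, and the averaging projection sends each such matching to a $\{0,\tfrac12,1\}$-valued point of $FSM(G,\prec)$, so every extreme point of $FSM(G,\prec)$ is half-integral. The one genuinely delicate step --- that $p(\chi^M)$ satisfies the stability inequalities of $(G,\prec)$ --- goes through because $\varphi'((u,0)(v,1))$ and $\varphi'((u,1)(v,0))$ are disjoint and their union is exactly the set of both copies of every edge of $\varphi(uv)$, so summing the two lifted stability constraints gives $2\,p(\chi^M)(\varphi(uv))\ge 2$; you identified and handled exactly this point. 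The structural half is also right: at a half-integral point the degree bound forces $E_{1/2}(x)$ to have maximum degree two, the stability inequality propagated along a path component collapses to $\tfrac12\ge 1$ at its last edge, and on a cycle it reduces to the monotone condition $p_{i-1}\ge p_i$ around the cycle, which forces all $p_i$ equal and hence cyclic preferences in one of the two senses, matching the paper's definition. In short, a complete and correct proof of a statement the paper only cites.
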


\begin{theorem}[Chen \textit{et al.} \cite{ChenDing12}]
\label{thm:ChenDing12}
Let $(G,\prec)$ be a simple preference system. Then $\pi(G,\prec)$ is totally dual $1/2$-integral. Moreover, $\pi(G,\prec)$ is totally dual integral if and only if $SM(G,\prec)=FSM(G,\prec)$.
\end{theorem}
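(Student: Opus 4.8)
The plan is to prove the two assertions separately, deriving the half-integrality of the dual from the bipartite case of Theorem~\ref{thm:KiraPap08} via a covering construction, and then isolating exactly when the resulting half-integral dual can be rounded to an integral one. For the first assertion (total dual $1/2$-integrality), I would pass to the bipartite double cover $G'$: take vertex classes $V^+$ and $V^-$, replace each edge $uv$ by the two edges $u^+v^-$ and $u^-v^+$, and let each copy $v^{\pm}$ inherit the preference $\prec_v$. Since $G$ is simple, $G'$ is a simple bipartite graph, so $(G',\prec')$ is a simple preference system and Theorem~\ref{thm:KiraPap08} applies to it. The key combinatorial observation is that, because two distinct edges of a simple graph meet in at most one vertex, the covering map restricts to a bijection between $\varphi_{G'}(f')$ and $\varphi(f)$ for each lift $f'$ of an edge $f$, and likewise between the domination sets appearing in the dual constraints. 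I would also record the involution $\tau$ swapping the $+$ and $-$ copies: it preserves $G'$, the preferences, and the lifted weight $c'(u^+v^-)=c'(u^-v^+)=c(uv)$. Lifting a point and symmetrizing it (averaging with its $\tau$-image) then shows that the optimum over $FSM(G',\prec')$ equals twice the optimum over $FSM(G,\prec)$ for every integral $c$.

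Now fix an integral $c$ with finite optimum. Theorem~\ref{thm:KiraPap08} supplies an integral optimal dual solution for $(G',\prec')$, which I would average with its $\tau$-image to obtain a $\tau$-symmetric, $1/2$-integral, optimal dual solution on $G'$. Using the bijection of stability and domination sets noted above, a $\tau$-symmetric dual solution on $G'$ descends coordinatewise to a dual solution on $(G,\prec)$, with dual feasibility transferring constraint by constraint; the doubling of the objective cancels the factor $\tfrac12$ introduced by the symmetrization, so the descended solution is dual optimal for $(G,\prec)$ and $1/2$-integral. This establishes that $\pi(G,\prec)$ is totally dual $1/2$-integral.

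For the equivalence, the forward implication is immediate: if $\pi(G,\prec)$ is TDI then, its right-hand side being integral, the Edmonds--Giles theorem cited in Section~\ref{intro} forces $FSM(G,\prec)$ to be integral; since the integral points of $FSM(G,\prec)$ are exactly the incidence vectors of stable matchings, $FSM(G,\prec)=SM(G,\prec)$. The reverse implication is the crux. Assuming $SM(G,\prec)=FSM(G,\prec)$, I would fix integral $c$, take the $1/2$-integral optimal dual $(\alpha,\beta)$ produced above, and round it to an integral dual optimum. Complementary slackness against an integral primal optimum (a stable matching, available because $FSM=SM$) pins down which dual constraints are tight, and the elements on which $\alpha,\beta$ are properly half-integral should form an even subsystem that can be adjusted by $\pm\tfrac12$ along alternating structures without changing the objective value or violating feasibility. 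The only obstruction to such an adjustment is an odd cycle with cyclic preferences; but by the structural half of Theorem~\ref{thm:AbelRoth94} such a cycle would yield a properly fractional vertex of $FSM(G,\prec)$, contradicting $FSM=SM$.

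The main obstacle I anticipate is precisely this rounding step in the reverse implication: converting the absence of odd cyclic-preference cycles into an explicit $\pm\tfrac12$ adjustment of $(\alpha,\beta)$ that simultaneously preserves dual feasibility and complementary slackness. Controlling the interaction between the stability domination sets, which need not be laminar, and the matching constraints under this adjustment is the delicate point, and it is where I expect the parity argument underlying Theorem~\ref{thm:AbelRoth94} to carry the real weight; the clean bipartite base case of Theorem~\ref{thm:Roth92} should serve as a useful sanity check, since there the obstruction is vacuous and the rounding must succeed trivially.
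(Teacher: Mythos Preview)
The paper does not prove Theorem~\ref{thm:ChenDing12}; it is quoted from Chen \textit{et al.}~\cite{ChenDing12} as a known result and used as a black box (specifically, to deduce Corollary~\ref{cor:prf2} from Lemma~\ref{lem:prf1}). There is therefore no proof in the paper to compare your proposal against.

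That said, a brief assessment of your sketch on its own terms: the bipartite double cover argument for the first assertion is sound. The $\tau$-symmetrization preserves both feasibility and objective value of the integral dual supplied by Theorem~\ref{thm:KiraPap08}, and the bijection you note between $\varphi_{G'}(f')$ and $\varphi(f)$ does make the dual constraints descend correctly, so the $1/2$-integral optimal dual on $G$ follows. For the second assertion, the forward direction via Edmonds--Giles is fine. The reverse direction, however, is only a heuristic in your write-up: you assert that the half-integral support of the dual ``should form an even subsystem'' adjustable by $\pm\tfrac12$, with odd cyclic-preference cycles as the sole obstruction, but you do not actually exhibit this structure or prove that the adjustment preserves all dual constraints and complementary slackness simultaneously. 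This is precisely the nontrivial content of the theorem in~\cite{ChenDing12}, and your proposal does not supply it; you yourself flag it as the main obstacle. So the first half is a genuine proof, the second half is a plausible plan but not yet an argument.
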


\section{Reductions}

Given a clique-acyclic orientation $D$ of line multigraph $L(H)$,
let $e\prec_v f$ if $(f,e)$ is an arc in $D$ for any two incident edges $e$ and $f$ with common end $v$ in $H$.
Hence $D$ is associated with a preference system $(H,\prec)$.
Recall that $\sigma(D)$ denotes the linear system which defines $FK(D)$.
Consequently, $\sigma(D)$ can be viewed as a linear system defined on preference system $(H,\prec)$.
The equivalence of constraints (\ref{vertex nonnegativity}) and constraints (\ref{edge nonnegativity}) follows directly. Constraints (\ref{domination constraints}) can be viewed as constraints (\ref{stability constraints}) because of the one to one correspondence between dominating vertex set $\{v\}\cup N^+_D(v)$ for $v\in V(D)$ and stable edge set $\varphi(e)$ for $e\in E(H)$.
Observe that cliques of $D$ correspond to three types of edge set in $H$:
\begin{enumerate}[label={(\alph*)}, itemsep=0.2mm]
\item $\delta(v)$ for $v\in V(H)$,
\item nontrivial subsets of $\delta(v)$ for $v\in V(H)$,
\item complete subgraphs of $H$ induced on three vertices,
\end{enumerate}
and all three types allow parallel edges.
Hence constraints (\ref{independence constraints}) can be viewed as constraints (\ref{matching constraints}) together with some extra constraints on $(H,\prec)$. Let $\mathcal{O}(H)$ denote the set of all complete subgraphs of $H$ induced on three vertices.
Then $\sigma(D)$ can be reformulated in terms of preference system $(H,\prec)$:
\begin{alignat}{4}
x(\varphi(e)) &\geq 1 &\qquad &\forall ~e &\in E(H),\label{constraints:1}\\
x(\delta(v)) &\leq 1 &\qquad &\forall ~v &\in V(H),\label{constraints:2}\\
x(S) &\leq 1 &\qquad \emptyset\subset S\subset \delta(v),\quad &\forall ~v&\in V(H),\label{constraints:3}\\
x(O) &\leq 1 &\qquad &\forall ~O&\in \mathcal{O}(H),\label{constraints:4}\\
x(e) &\geq 0 &\qquad &\forall ~e &\in E(H)\label{constraints:5}.
\end{alignat}
Notice that constraints (\ref{constraints:1}), (\ref{constraints:2}) and (\ref{constraints:5}) form the Rothblum system $\pi(H,\prec)$ which defines $FSM(H,\prec)$.
Constraints (\ref{constraints:3}) are redundant with respect to $\pi(H,\prec)$ due to constraints (\ref{constraints:2}).
As we shall see, constraints (\ref{constraints:4}) are also redundant with respect to $\pi(H,\prec)$.
Hence $FK(D)$ is essentially defined by Rothblum system $\pi(H,\prec)$, or equivalently that $FK(D)=FSM(H,\prec)$.

Observe that $H$ is a multigraph.
To bridge the gap between simple preference system and $(H,\prec)$,
we resort to the gadget introduced by Cechl\'{a}rov\'{a} and Fleiner \cite{CechFlei05}.
We define a simple preference system $(H^\prime,\prec^\prime)$ from $(H,\prec)$
by substituting each parallel edge $e$ with endpoints $u$ and $v$ in $H$ by a $6$-cycle with two hanging edges as in Figure \ref{gadget}
\begin{figure}
\centering
  \includegraphics[width=.65\linewidth]{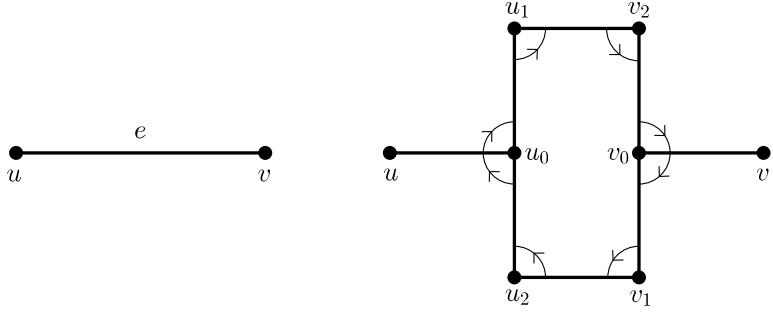}
  \caption{The gadget introduced for parallel edges}
  \label{gadget}
\end{figure}
such that $u u_0$ (\textit{resp.} $v v_0$) has the same order with $u v$ in $\prec_u$ (\textit{resp.} $\prec_v$).
Notice that the construction preserves the parity of cycles with cyclic preferences in $H$.

\begin{lemma}
\label{lem:reduct1}
$FSM(H,\prec)$ is a projection of $FSM(H^\prime,\prec^\prime)$.
\end{lemma}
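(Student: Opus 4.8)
The plan is to exhibit the projection map explicitly and verify it carries feasible points to feasible points in both directions. Recall the gadget: each parallel edge $e=uv$ of $H$ is replaced in $H'$ by two hanging edges $uu_0$ and $vv_0$ together with a $6$-cycle on vertices $u_0,u_1,u_2,v_0,v_1,v_2$ (say, in cyclic order $u_0u_1\,u_1u_2\,u_2v_0\,v_0v_1\,v_1v_2\,v_2u_0$), with preferences chosen so that $uu_0$ occupies the same slot in $\prec'_u$ that $uv$ occupied in $\prec_u$, similarly for $vv_0$, and so that the $6$-cycle carries cyclic preferences at its internal vertices. Non-parallel edges of $H$ are retained unchanged. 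Define the projection $p\colon\mathbb{R}^{E(H')}\to\mathbb{R}^{E(H)}$ by leaving the coordinates of retained edges untouched and, for each gadget replacing $e=uv$, setting $p(x)(e)=x(uu_0)$ (equivalently $=x(vv_0)$ — that these two values coincide on feasible points will be part of the verification). I would first prove that this is well defined on $FSM(H',\prec')$, i.e.\ that every $x\in FSM(H',\prec')$ satisfies $x(uu_0)=x(vv_0)$ for each gadget: this is where the $6$-cycle does its work. Along a cycle with cyclic preferences the stability constraints $x(\varphi'(f))\ge 1$ and the matching constraints $x(\delta'(w))\le 1$ force, by a short chase around the $6$-cycle (alternating $\ge$ and $\le$ as one passes through internal vertices), that the two hanging edges receive equal weight and that the cycle edges are determined up to the usual one-parameter family.

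Next I would check that $p(FSM(H',\prec'))\subseteq FSM(H,\prec)$. Given $x\in FSM(H',\prec')$, set $y=p(x)$. The matching constraint at a vertex $v\in V(H)$ reads $y(\delta_H(v))\le 1$; since the edges of $\delta_H(v)$ correspond bijectively to edges of $\delta_{H'}(v)$ (a parallel edge $uv$ corresponds to the hanging edge $vv_0$, whose weight is $y(uv)$ by definition), this is exactly the constraint $x(\delta_{H'}(v))\le 1$. The stability constraint $y(\varphi_H(f))\ge 1$ for $f\in E(H)$ follows from $x(\varphi'_{H'}(f'))\ge 1$ for the corresponding edge $f'$ in $H'$, using the fact that the gadget preserves the domination order at the original vertices $u,v$ (so $\varphi_H$ and $\varphi'_{H'}$ match up on the non-gadget parts) and that hanging-edge weights equal the corresponding original-edge weights. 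Nonnegativity is immediate.

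For the reverse inclusion $FSM(H,\prec)\subseteq p(FSM(H',\prec'))$, given $y\in FSM(H,\prec)$ I would construct a preimage $x$: retain $y$ on the non-gadget edges, set $x(uu_0)=x(vv_0)=y(uv)$ for each gadget, and assign the interior $6$-cycle edges the values forced by the one-parameter family — concretely $x(u_1u_2)=x(v_1v_2)=y(uv)$ on the "parallel" pair of cycle edges and $x(u_0u_1)=x(u_2v_0)=x(v_0v_1)=x(v_2u_0)=\max\{0,1-y(uv)\}$ (or, more carefully, the alternating assignment $1-y(uv)$ on one perfect matching of the $6$-cycle and $y(uv)$ on the complementary pair — I would pin down the exact split while checking the constraints). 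One then verifies that every stability and matching constraint of $(H',\prec')$ holds: the internal constraints reduce to $y(uv)+\bigl(1-y(uv)\bigr)\ge 1$ and $y(uv)+\bigl(1-y(uv)\bigr)\le 1$ type identities, and the constraints touching $u$ or $v$ reduce to the corresponding constraints of $(H,\prec)$ via the slot-preserving property of the construction. Since $p(x)=y$, this completes the proof.

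The main obstacle I anticipate is purely bookkeeping: fixing one concrete, consistent assignment of weights to the six interior edges of the gadget that simultaneously satisfies all six stability constraints and all internal matching constraints for an arbitrary value $y(uv)\in[0,1]$, and confirming that the domination sets $\varphi'$ at the new degree-two vertices are exactly what the cyclic-preference choice makes them. Once the gadget's internal arithmetic is nailed down, both inclusions are routine, and the equality of the hanging-edge weights on feasible points (the only genuinely structural point) follows from the alternating $\le/\ge$ chase around the $6$-cycle.
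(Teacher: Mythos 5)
There is a genuine gap, and it sits exactly where you deferred the work. The entire content of this lemma is the explicit lift of a point $x\in FSM(H,\prec)$ to a feasible point of $FSM(H',\prec')$, and the correct lift cannot depend on $x_e$ alone: the gadget's interior edge values must encode the quantity $x(\varphi_u(e))$, the weight of the edges preferred to $e$ at one endpoint. The paper sets $x'_{uu_0}=x'_{vv_0}:=x_e$, $x'_{u_0u_1}=x'_{v_0v_2}:=1-x_e-x(\varphi_u(e))$, $x'_{u_0u_2}=x'_{v_0v_1}:=x(\varphi_u(e))$, $x'_{u_1v_2}:=x_e+x(\varphi_u(e))$, $x'_{u_2v_1}:=1-x(\varphi_u(e))$; this makes the degree sum at $u_0$ equal to $x_e+(1-x_e-x(\varphi_u(e)))+x(\varphi_u(e))=1$ and lets the internal edge at $u_0$ absorb exactly the domination mass that, in $H$, came from the other endpoint $v$. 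Both concrete assignments you float fail outright: with $y=y(uv)$, your first variant gives the three edges at $u_0$ the values $y,\,1-y,\,1-y$, so $x'(\delta(u_0))=2-y>1$ whenever $y<1$; your perfect-matching variant gives $x'(\delta(u_0))=1+y>1$ whenever $y>0$. No assignment that is a function of $y(uv)$ alone can work, because the stability constraint for the hanging edge $uu_0$ reads $x'_{uu_0}+x'(\varphi'_u(uu_0))+x'(\varphi'_{u_0}(uu_0))\geq 1$, and when $y(uv)=0$ the required contribution from the interior at $u_0$ is $1-y(\varphi_u(uv))$, which varies with how the unit of domination of $uv$ is split between $u$ and $v$. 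So ``pinning down the exact split'' is not bookkeeping; it is the lemma.

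Two smaller points. First, your guessed cyclic order of the gadget ($u_0u_1u_2v_0v_1v_2$) does not match the paper's (whose interior edges are $u_0u_1$, $u_0u_2$, $v_0v_1$, $v_0v_2$, $u_1v_2$, $u_2v_1$, i.e.\ the $6$-cycle $u_0u_1v_2v_0v_1u_2u_0$); since the verification is constraint-by-constraint, this matters. Second, you attempt both inclusions, whereas the paper only exhibits the lift (the inclusion $FSM(H,\prec)\subseteq p(FSM(H',\prec'))$) and leaves the forward direction implicit; your claim that feasibility forces $x'(uu_0)=x'(vv_0)$ is plausible via the chase you sketch, but it is asserted rather than proved. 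Neither of these would sink the proof on its own; the infeasible lift does.
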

\begin{proof}
Take $x\in FSM(H,\prec)$.
For each parallel edge $e$ with endpoints $u$ and $v$ in $H$,
we set the value of edges in the gadget corresponding to $e$ as follows:
\begin{enumerate}[label={(\alph*)}, itemsep=0.2mm]
  \item $x^\prime_{u u_0}=x^\prime_{v v_0}:=x_{e}$;
  \item $x^\prime_{u_0 u_1}=x^\prime_{v_0 v_2}:=1-x_{e}-x(\varphi_u(e))$;
  \item $x^\prime_{u_0 u_2}=x^\prime_{v_0 v_1}:=x(\varphi_u (e))$;
  \item $x^\prime_{u_1 v_2}:=x_e +x(\varphi_u (e))$;
  \item $x^\prime_{u_2 v_1}:=1-x(\varphi_u (e))$.
\end{enumerate}
For each edge $f$ without parallel edges in $H$, set $x^\prime_f:=x_f$.
It is easy to check that $x^\prime\in FSM(H^\prime, \prec^\prime)$.
Hence $FSM(H,\prec)$ is a projection of $FSM(H^\prime,\prec^\prime)$.
\end{proof}

By Theorem \ref{thm:AbelRoth94} and Lemma \ref{lem:reduct1}, $FSM(H,\prec)$ is $1/2$-integral since $FSM(H^\prime,\prec^\prime)$ is $1/2$-integral.
Hence vertices in $FSM(H,\prec)$ with $x(O)=3/2$ where $O\in \mathcal{O}(H)$ are the only possible vertices of $FSM(H,\prec)$ that violate constraints (\ref{constraints:4}).
However each $O$ with $x(O)=3/2$ leads to a $3$-cycle with cyclic preferences which arises from a directed $3$-cycle in $D$, contradicting to the assumption that $D$ is clique-acyclic.
It follows that constraints (\ref{constraints:4}) are unbinding for all vertices of $FSM(H,\prec)$.
Hence constraints (\ref{constraints:4}) are redundant with respect to $\pi(H,\prec)$

\begin{lemma}
\label{lem:reduct2}
  If $\pi(H^\prime,\prec^\prime)$ is  totally dual integral, then so is $\pi(H,\prec)$.
\end{lemma}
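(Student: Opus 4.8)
The plan is to prove the implication by transferring an integral optimal dual certificate from the simple system $\pi(H^\prime,\prec^\prime)$ down to $\pi(H,\prec)$. Recall that $\pi(H,\prec)$ is totally dual integral precisely when, for every $c\in\mathbb{Z}^{E(H)}$ for which $\max\{c^\top x:x\in FSM(H,\prec)\}$ is finite, the dual of this linear program admits an integral optimal solution; so fix such a $c$ and aim to produce one.

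First I would lift the objective. Let $P$ denote the projection of Lemma~\ref{lem:reduct1}, so that $P\bigl(FSM(H^\prime,\prec^\prime)\bigr)=FSM(H,\prec)$. I choose an integral $c^\prime\in\mathbb{Z}^{E(H^\prime)}$ equal to $c$ on every edge of $H$ that has no parallel copy, and, on the gadget $\Gamma_e$ replacing a parallel edge $e=uv$ --- the $6$-cycle through $u_0,u_1,v_2,v_0,v_1,u_2$ together with the hanging edges $uu_0$ and $vv_0$ --- equal to $c_e$ on $uu_0$ and to $0$ on the remaining seven edges of $\Gamma_e$. With this choice one has $(c^\prime)^\top x^\prime=c^\top P(x^\prime)$ for every $x^\prime\in FSM(H^\prime,\prec^\prime)$; here one uses the identity $x^\prime_{uu_0}=x^\prime_{vv_0}$, valid throughout $FSM(H^\prime,\prec^\prime)$, which is checked directly from the matching and stability constraints inside $\Gamma_e$. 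Since $P$ is onto, it follows that $\max\{(c^\prime)^\top x^\prime:x^\prime\in FSM(H^\prime,\prec^\prime)\}=\max\{c^\top x:x\in FSM(H,\prec)\}$, the common value is finite, and $P$ maps optimal solutions to optimal solutions.

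Next I would apply the hypothesis and contract the dual. Since $\pi(H^\prime,\prec^\prime)$ is totally dual integral, the dual of $\max\{(c^\prime)^\top x^\prime\}$ has an integral optimal solution, with nonnegative integral multipliers $\bar y_{e^\prime}$ on the stability constraints~(\ref{stability constraints}) and $\bar z_{v^\prime}$ on the matching constraints~(\ref{matching constraints}) of $H^\prime$. From $(\bar y,\bar z)$ I build a dual solution $(y,z)$ for $\pi(H,\prec)$: off the gadgets I keep $\bar y$ and $\bar z$ unchanged; on $\Gamma_e$ with $e=uv$ I set $y_e$ equal to $\bar y_{uu_0}$ corrected by the multipliers on the internal stability constraints of $\Gamma_e$, and I fold the multipliers on the internal matching constraints of $\Gamma_e$ into $z_u$ and $z_v$. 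Then $(y,z)$ is nonnegative and integral by construction, and it remains to verify that it satisfies the dual constraints of $\pi(H,\prec)$ and attains the value $\max\{c^\top x:x\in FSM(H,\prec)\}$, whence it is an integral optimal dual solution and $\pi(H,\prec)$ is totally dual integral. The cleanest way to obtain optimality is complementary slackness against $x:=P(x^\prime)$, where $x^\prime$ is a primal optimum complementary to $(\bar y,\bar z)$: off the gadgets nothing has changed, so the only remaining checks are local to each eight-edge gadget $\Gamma_e$.

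I expect the gadget contraction to be the only real obstacle. One must pin down, for an \emph{integral} optimal dual solution, exactly which of the internal stability and matching constraints of $\Gamma_e$ may carry positive multipliers --- this should follow from complementary slackness with the primal optimum restricted to $\Gamma_e$, perhaps after an integrality-preserving normalization of $(\bar y,\bar z)$ inside each gadget --- and then verify the bookkeeping identity that replaces the gadget's dual contribution by the single term arising from $y_e$ while preserving dual feasibility at $u$, $v$ and the edge $e$. This is finite case analysis on a fixed eight-edge gadget, but it is exactly where the design of the gadget and the integrality of the dual certificate are used.
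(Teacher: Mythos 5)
Your overall strategy --- lift the objective to $(H^\prime,\prec^\prime)$, invoke total dual integrality there, and push an integral optimal dual solution back down --- is a legitimate alternative in principle to the paper's argument, which instead obtains $\pi(H,\prec)$ from $\pi(H^\prime,\prec^\prime)$ by Fourier--Motzkin elimination of the gadget variables in a specified order and then invokes Cook's theorem that total dual integrality is preserved under elimination of a variable occurring with coefficients $0,\pm1$. But as written your proposal has a genuine gap: the entire content of the lemma sits in the gadget-local dual contraction, and you explicitly defer it (``this is finite case analysis on a fixed eight-edge gadget'') without carrying it out. The recipe you give is not yet a construction: ``set $y_e$ equal to $\bar y_{uu_0}$ corrected by the multipliers on the internal stability constraints'' does not specify the correction, and it is not evident the result is nonnegative; ``fold the multipliers on the internal matching constraints into $z_u$ and $z_v$'' alters $z_u$ and $z_v$, which appear in the dual constraint of \emph{every} edge of $H$ incident to $u$ or $v$, so feasibility off the gadget is not automatic, and the dual objective $\sum_v z_v-\sum_e y_e$ must be shown to decrease by exactly the gadget's contribution. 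Moreover, the dual constraint of $e$ in $\pi(H,\prec)$ couples the $u$-side and $v$-side of $\varphi(e)$ in a single inequality, whereas in $H^\prime$ these are split between the constraints for $uu_0$ and $vv_0$, each involving the internal vertices $u_0,v_0$ and internal stability multipliers; reconciling these is precisely where the six-cycle's design is used, and none of it is verified. Until that analysis is done, the proposal is a plan rather than a proof --- and if completed it would essentially re-derive by hand what Cook's elimination theorem gives the paper for free.

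A secondary, smaller point: your equality of optima uses not only that the projection $P$ is onto $FSM(H,\prec)$, but also that $P$ maps $FSM(H^\prime,\prec^\prime)$ \emph{into} $FSM(H,\prec)$; in particular the stability constraint $x(\varphi(e))\geq 1$ for a parallel edge $e$ must be deduced from the gadget constraints after setting $x_e:=x^\prime_{uu_0}$. This is exactly the inequality $x_{vv_0}+x(\varphi_v(vv_0))+x(\varphi_u(uu_0))\geq 1$ that the paper's elimination produces, and it deserves an explicit check rather than being absorbed into ``$P$ maps optimal solutions to optimal solutions.''
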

\begin{proof}
  We show that $\pi(H,\prec)$ can be obtained from $\pi(H^\prime,\prec^\prime)$ after a series of Fourier-Motzkin eliminations.
  It suffices to demonstrate one elimination process from a gadget to an edge.
  Given a gadget as in Figure \ref{gadget} arising from edge $e$, eliminate $u_1 v_2$ from $\pi(H^\prime,\prec^\prime)$ first.
  Then all constraints involving $x_{u_1 v_2}$ are replaced by equality $x_{u_0 u_1}=x_{v_0 v_2}$.
  Similarly, eliminating $u_2 v_1$ yields equality $x_{v_0 v_1}=x_{u_0 u_2}$.
  Next eliminating $u u_0$ gives $x(\delta (u)\backslash\{u u_0\})\leq x_{u_0 u_1}+x_{u_0 u_2}$ and $x_{u_0 u_2}\leq x(\varphi_u(uv))$.
  After eliminating $u_0 u_1$ and $u_0 u_2$, we arrive at $x(\delta(u)\backslash\{u u_0\})\leq x_{v_0 v_1}+x_{v_0 v_2}$ and $x_{v_0 v_1}\leq x(\varphi_u(uv))$.
  In the end, canceling $v_0 v_1$ and $v_0 v_2$ gives $x_{v v_0}+x(\varphi_v(v v_0)+x(\varphi_u(u u_0))\geq 1$ and $x_{v v_0}+x(\delta(u)\backslash\{u u_0\})\leq 1$.
  Besides, $x_{v v_0}+x(\delta(v)\backslash\{v v_0\})\leq 1$ is unchanged.
  Hence all constraints involving edges from the gadget are reduced to three constraints
  $x_{v v_0}+x(\varphi_v(v v_0)+x(\varphi_u(u u_0))\geq 1$,
  $x_{v v_0}+x(\delta (u)\backslash\{u u_0\})\leq 1$,
  and $x_{v v_0}+x(\delta(v)\backslash\{v v_0\})\leq 1$,
  which can be viewed as $x(\varphi(e))\geq 1$, $x(\delta(u))\leq 1$, and $x(\delta(v))\leq 1$ respectively.
  Performing Fourier-Motzkin eliminations in such an order for each gadget in $H^\prime$ leads to a linear system defined on $(H,\prec)$, which is precisely the same with $\pi(H,\prec)$ (renaming variables and removing duplicating constraints if necessary).
  As proved by Cook \cite{Cook83}, total dual integrality is preserved under Fourier-Motzkin elimination of a variable if it occurs in each constraint with coefficient $0$ or $\pm 1$.
  Hence the lemma follows.
\end{proof}

By Lemma \ref{lem:reduct1} and Lemma \ref{lem:reduct2}, we generalize the theorem of Chen \textit{et al.} \cite{ChenDing12} on the stable matching problem to multigraphs.
\begin{corollary}
  Let $(G,\prec)$ be a preference system, where $G$ is a multigraph.
  Then $\pi(G,\prec)$ is $1/2$-totally dual integral.
  Moreover, $\pi(G,\prec)$ is totally dual integral if and only if $FSM(G,\prec)=SM(G,\prec)$.
\end{corollary}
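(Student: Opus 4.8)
The plan is to carry everything over to the simple preference system $(G',\prec')$ obtained from $(G,\prec)$ by the gadget substitution of Figure~\ref{gadget}. Since $(G',\prec')$ is simple, Theorem~\ref{thm:ChenDing12} already tells us that $\pi(G',\prec')$ is totally dual $1/2$-integral and that it is totally dual integral if and only if $FSM(G',\prec')=SM(G',\prec')$; what remains is to transport these two facts down to $(G,\prec)$ by means of Lemmas~\ref{lem:reduct1} and~\ref{lem:reduct2}.

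For the total dual $1/2$-integrality of $\pi(G,\prec)$ I would rerun the Fourier--Motzkin derivation in the proof of Lemma~\ref{lem:reduct2}, which presents $\pi(G,\prec)$ as the system obtained from $\pi(G',\prec')$ by successively eliminating the gadget variables, each occurring in every current constraint with a coefficient in $\{0,\pm1\}$. The only new observation needed is that Cook's preservation theorem~\cite{Cook83} holds for total dual $1/k$-integrality as well: rescaling a totally dual $1/k$-integral system by $1/k$ turns it into a totally dual integral system in which the eliminated variable still has all its nonzero coefficients equal, and the standard argument that a dual optimum of the larger system descends to its Fourier--Motzkin projection only needs a nonnegative, transportation-type splitting of the dual multipliers across the positive and negative rows of that variable, a splitting that can be chosen $1/k$-integral by total unimodularity once its margins are. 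Applied along the chain from $\pi(G',\prec')$, which is totally dual $1/2$-integral, this gives that $\pi(G,\prec)$ is totally dual $1/2$-integral.

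For the equivalence, one direction is immediate: if $\pi(G,\prec)$ is totally dual integral then, its right-hand side being integral, $FSM(G,\prec)$ is an integral polytope by the Edmonds--Giles theorem recalled in the introduction, which is exactly the statement $FSM(G,\prec)=SM(G,\prec)$ because the integral points of $FSM(G,\prec)$ are the incidence vectors of stable matchings. For the converse I would reduce to proving that $FSM(G,\prec)=SM(G,\prec)$ implies $FSM(G',\prec')=SM(G',\prec')$; granting this, Theorem~\ref{thm:ChenDing12} makes $\pi(G',\prec')$ totally dual integral and Lemma~\ref{lem:reduct2} transfers this to $\pi(G,\prec)$. I would prove the implication by showing that the reduction map sending $x'\in FSM(G',\prec')$ to the point $x$ defined by $x_e:=x'_{uu_0}$ for each former parallel edge $e=uv$ and $x_f:=x'_f$ on every other edge is an affine \emph{bijection} onto $FSM(G,\prec)$: surjectivity is Lemma~\ref{lem:reduct1}, while injectivity is the statement that the constraints of $\pi(G',\prec')$ pin each gadget down, forcing every feasible point to take on that gadget precisely the values written out in the proof of Lemma~\ref{lem:reduct1} --- a rigidity that is implicit in the equalities generated by the Fourier--Motzkin elimination in Lemma~\ref{lem:reduct2}. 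Since this bijection and its inverse both send integral points to integral points, they match up the vertex sets and preserve integrality, so $FSM(G',\prec')$ is an integral polytope exactly when $FSM(G,\prec)$ is. (One could instead argue in contrapositive form from a fractional vertex $x^*$ of $FSM(G',\prec')$, using Theorem~\ref{thm:AbelRoth94} to write $E_{1/2}(x^*)$ as a disjoint union of cycles with cyclic preferences and the parity-preserving correspondence of cyclic-preference cycles under the gadget to project this configuration back to $(G,\prec)$.)

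The crux is this implication $FSM(G,\prec)=SM(G,\prec)\Rightarrow FSM(G',\prec')=SM(G',\prec')$, i.e. the fact that the Cechl\'{a}rov\'{a}--Fleiner gadget neither creates nor absorbs non-integrality of the fractional stable matching polytope. In the bijection route this is the injectivity (rigidity) of the reduction map, which requires a careful reading of the gadget's preference pattern together with the Fourier--Motzkin computation of Lemma~\ref{lem:reduct2}; in the contrapositive route it is the verification that the projected half-integral point is again a \emph{vertex} of $FSM(G,\prec)$, for which one must control how the cycles of $E_{1/2}(x^*)$ --- and the outside edges that make them rigid --- behave under contraction of the gadget detours. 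The remaining ingredient, namely extending Cook's theorem to total dual $1/k$-integrality, is routine once the transportation-type splitting above is set up.
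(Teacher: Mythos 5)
Your overall strategy (pass to the simple system $(G',\prec')$ via the gadget, quote Theorem~\ref{thm:ChenDing12} there, and transfer back through the Fourier--Motzkin reduction) is exactly the route the paper intends, and two of your three ingredients are sound: the extension of Cook's elimination theorem to total dual $1/k$-integrality (rerun the transportation-type splitting of the dual multipliers with $1/k$-integral margins; the splitting polytope is described by a network matrix, so it has $1/k$-integral vertices) does give the $1/2$-TDI statement, and the forward implication of the ``moreover'' part via Edmonds--Giles is fine.

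The gap is in your treatment of the crux you yourself single out, namely $FSM(G,\prec)=SM(G,\prec)\Rightarrow FSM(G',\prec')=SM(G',\prec')$. The reduction map is \emph{not} an affine bijection, so the rigidity claim on which your main route rests is false. The equalities produced by the elimination in Lemma~\ref{lem:reduct2} are only $x'_{u_0u_1}=x'_{v_0v_2}$ and $x'_{u_0u_2}=x'_{v_0v_1}$; they do not determine the gadget values from the values outside the gadget. Concretely, the six-cycle of the gadget carries cyclic preferences (this is forced by the constraints that make the lift of Lemma~\ref{lem:reduct1} feasible), so whenever the outside configuration has $x(\varphi_u(e))\ge 1/2$ and $x(\varphi_v(e))\ge 1/2$ and $x'_{uu_0}=x'_{vv_0}=0$, both the canonical completion of Lemma~\ref{lem:reduct1} and the assignment putting $1/2$ on all six cycle edges (and, in the integral setting, both perfect matchings of the six-cycle) are feasible over one and the same projected point; the fibre of the projection is therefore not a singleton, and integrality of $FSM(G,\prec)$ cannot be matched up with integrality of $FSM(G',\prec')$ by ``transporting vertex sets'' along a bijection. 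Your fallback contrapositive sketch does not close this hole either: a fractional vertex $x^*$ of $FSM(G',\prec')$ may have all of its half-integral cycles confined to gadget six-cycles, in which case its projection is integral, and even when the projection is fractional it need not be a vertex of $FSM(G,\prec)$, so ``projecting the configuration back'' proves nothing as stated. What is actually needed here is a genuine argument---for instance, a perturbation argument in the spirit of the proof of Lemma~\ref{lem:prf1} showing that half-integral cycles living (partly or wholly) inside gadgets never occur at a vertex of $FSM(G',\prec')$ unless they force a fractional vertex of $FSM(G,\prec)$---and this step is missing from your proposal (the paper itself is terse here, but your proposed substitute for it is incorrect rather than merely incomplete).
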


Moreover, we have the following corollary for kernels in clique-acyclic orientations of line multigraph.
\begin{corollary}
  Let $D$ be a clique-acyclic orientation of line multigraph.
  Then $\sigma(D)$ is $1/2$-totally dual integral.
  Moreover, $\sigma(D)$ is totally dual integral if and only if $FK(D)=K(D)$.
\end{corollary}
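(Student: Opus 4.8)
The plan is to derive this corollary from the preceding one. Let $L(H)$ be the line multigraph of which $D$ is an orientation and let $(H,\prec)$ be the associated preference system. By the reduction in Section~3, $\sigma(D)$ is equivalent to the system (\ref{constraints:1})--(\ref{constraints:5}) on $(H,\prec)$, the blocks (\ref{constraints:3}) and (\ref{constraints:4}) are redundant, and therefore $FK(D)=FSM(H,\prec)$; since the two polyhedra coincide we also obtain $K(D)=SM(H,\prec)$. Thus $\sigma(D)$ is obtained from the Rothblum system $\pi(H,\prec)$ --- the subsystem consisting of (\ref{constraints:1}), (\ref{constraints:2}), (\ref{constraints:5}) --- by appending the valid inequalities (\ref{constraints:3}) and (\ref{constraints:4}). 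Because total dual integrality, and its $1/k$-refinement, is a property of a linear \emph{system} and is not in general preserved under appending valid inequalities, the point of the proof is to handle the appending of these redundant blocks.

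First I would record a general observation: if $Ax\le b$ is TDI$/k$ and $\bar Ax\le\bar b$ arises from it by appending inequalities $Cx\le d$ valid for $P:=\{x:Ax\le b\}$, then $\bar Ax\le\bar b$ is again TDI$/k$. Indeed, for integral $c$ with $\mu^*:=\max\{c^Tx:x\in P\}$ finite, take a $1/k$-integral optimal solution $y$ to the dual of $\max\{c^Tx:Ax\le b\}$ (so $y\ge 0$, $A^Ty=c$, $b^Ty=\mu^*$) and extend it by zero multipliers on the rows $Cx\le d$. The extended vector $\bar y$ satisfies $\bar y\ge 0$, $\bar A^T\bar y=c$, $\bar b^T\bar y=\mu^*$; since $\{x:\bar Ax\le\bar b\}=P$ the primal optimum is still $\mu^*$, so by weak duality $\bar y$ is an optimal dual solution, and it is $1/k$-integral. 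Applying this with $Ax\le b$ equal to $\pi(H,\prec)$ and $Cx\le d$ the constraints (\ref{constraints:3})--(\ref{constraints:4}), total dual $1/2$-integrality of $\pi(H,\prec)$ (the preceding corollary) gives that $\sigma(D)$ is totally dual $1/2$-integral.

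For the equivalence: if $FK(D)=K(D)$ then $FSM(H,\prec)=SM(H,\prec)$, hence $\pi(H,\prec)$ is TDI by the preceding corollary, and the same appending observation with $k=1$ shows $\sigma(D)$ is TDI. Conversely, if $\sigma(D)$ is TDI, then since its right-hand side is $0/1$-valued the theorem of Edmonds and Giles yields that $FK(D)$ is an integral polytope; as $K(D)$ is the convex hull of the integral points of $FK(D)$ and every vertex of $FK(D)$ is such a point, $FK(D)=K(D)$.

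The step I expect to be the real obstacle is the appending argument. One is tempted to just discard the redundant constraints (\ref{constraints:3}) and (\ref{constraints:4}), but TDI is not a property of the solution set alone; note in particular that (\ref{constraints:3}) is a nonnegative integer combination of (\ref{constraints:2}) and (\ref{constraints:5}) and is therefore harmless for trivial reasons, whereas (\ref{constraints:4}) is not --- its redundancy genuinely relies on clique-acyclicity --- so the clean route is to exploit the \emph{exact} identity $FK(D)=FSM(H,\prec)$ from Section~3 (not merely an inclusion), which is precisely what makes the zero-extended dual solution optimal.
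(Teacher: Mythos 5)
Your proposal is correct and follows essentially the same route the paper intends for this corollary: rewrite $\sigma(D)$ as the Rothblum system $\pi(H,\prec)$ plus the redundant constraint blocks, invoke the preceding corollary for multigraph preference systems, and transfer both the TDI$/2$ property and the TDI-iff-integrality equivalence. The one point you treat more carefully than the paper does --- that appending inequalities valid for the feasible region preserves total dual $1/k$-integrality via zero-extension of the dual solution --- is a genuine and necessary justification (the paper uses the same reasoning implicitly in the proof of Theorem \ref{thm:main}), and your observation that the redundancy of the triangle constraints rests on clique-acyclicity is exactly where the hypothesis enters.
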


\section{Proofs}
Observe that when $D$ is good, both $(H,\prec)$ and $(H^\prime,\prec^\prime)$ admit no odd cycles with cyclic preferences.
In the following we exhibit some properties of simple preference systems admitting no odd cycles with cyclic preferences.
\begin{lemma}
\label{lem:prf1}
Let $(G,\prec)$ be a simple preference system. If $(G,\prec)$ admits no odd cycles with cyclic preferences, then $SM(G,\prec)=FSM(G,\prec)$.
\end{lemma}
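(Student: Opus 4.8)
The plan is to prove the stronger assertion that every vertex of $FSM(G,\prec)$ is integral; since the integral points of $FSM(G,\prec)$ are exactly the incidence vectors of stable matchings, this gives $SM(G,\prec)=FSM(G,\prec)$. So I would fix a vertex $x$ of $FSM(G,\prec)$ and show it is integral. By Theorem~\ref{thm:AbelRoth94}, $x$ is $1/2$-integral and $E_{1/2}(x)$ is a disjoint union of cycles $C_1,\dots,C_m$ each carrying cyclic preferences, and by hypothesis every $C_i$ has even length. Assuming $x$ is not integral (so $m\ge 1$), I would reach a contradiction by producing a nonzero $z\in\mathbb{R}^E$ with $x\pm\varepsilon z\in FSM(G,\prec)$ for all sufficiently small $\varepsilon>0$.

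The first step is to see that such a $z$ is almost forced. Nonnegativity forces $z$ to vanish off $E_+(x)$; at any $v$ on a cycle $C_i$ the two $C_i$-edges at $v$ carry $x$-value $1/2$ so the matching constraint at $v$ is tight and every other edge there vanishes, and at the ends of an edge of $x$-value $1$ the matching constraint is again tight---together these force $z$ to be supported on $E_{1/2}(x)$ and to alternate in sign around each $C_i$. Thus $z$ is determined up to one ``phase'' $\phi_i\in\{0,1\}$ per cycle: set $z(e)=(-1)^{p(e)+\phi_i}$ for $e$ on $C_i$ (with $p(e)$ the position of $e$ along $C_i$) and $z=0$ elsewhere. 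For any phases, nonnegativity and all matching constraints hold, as does every stability constraint that is not tight or whose edge set misses $E_{1/2}(x)$. The one remaining case is a tight stability constraint at an edge $f$ with $x(f)=0$ and $\varphi(f)\cap E_{1/2}(x)\neq\emptyset$. A short case analysis---using that $x(\delta(v))=1$ whenever $v$ lies on a cycle, and that tightness rules out the two cycle edges at an endpoint of $f$ both dominating $f$---shows that both endpoints of $f$ are \emph{sandwiched}: the endpoint on $C_i$ lies, in preference, between the two $C_i$-edges there with the preferred one dominating $f$, and likewise the other endpoint on some $C_j$. Then $z(\varphi(f))=z(a)+z(b)$, where $a$ and $b$ are those two preferred cycle edges, and I need to choose phases making this zero for every such $f$. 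If $i=j$, then $f$ is a chord of $C_i$ and $z(a)+z(b)\ne 0$ would let me splice $f$ with the arc of $C_i$ between its two sandwiched vertices---which then has even length---into an odd cycle with cyclic preferences, contradicting the hypothesis; so no such chord exists. If $i\ne j$, I get a single equation $\phi_i+\phi_j\equiv c_f\pmod 2$ with $c_f$ determined by the positions of $a$ and $b$.

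It remains to solve all these equations simultaneously, that is, to show the signed graph on node set $\{C_1,\dots,C_m\}$ having an edge labelled $c_f$ for each sandwiched $f$ is balanced; this is the step I expect to be the main obstacle. I would argue by contradiction: from a circuit $C_{j_1},f_1,C_{j_2},\dots,f_r,C_{j_1}$ with $\sum_t c_{f_t}$ odd, splice a closed walk $W$ of $G$ by following each $f_t$ from $C_{j_t}$ to $C_{j_{t+1}}$ and, between consecutive $f$'s, running along the intervening cycle in its cyclic-preference direction. The sandwiching at the ends of the $f_t$ makes $W$ carry cyclic preferences at every vertex, and a direct count of positions gives $|W|\equiv\sum_t c_{f_t}\pmod 2$, so $|W|$ is odd; extracting from $W$ a simple cycle that still carries cyclic preferences contradicts the hypothesis. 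The subtle point is that $W$ need not be simple when two of the $f_t$ share a vertex; I expect to resolve this by rerouting through the adjacent cycle in the opposite (``decreasing'') direction, which keeps the cyclic-preference property and the length parity---as is already visible in the cases $r=1,2$. With balance in hand, a consistent phase assignment exists, the resulting $z$ is nonzero and keeps $x\pm\varepsilon z$ feasible, and $x$ is not a vertex---the contradiction sought. Hence every vertex of $FSM(G,\prec)$ is integral, and $SM(G,\prec)=FSM(G,\prec)$.
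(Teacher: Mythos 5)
Your overall strategy is the same as the paper's: by Theorem~\ref{thm:AbelRoth94} every vertex is $1/2$-integral with $E_{1/2}(x)$ a disjoint union of even cycles with cyclic preferences, and you perturb by an alternating vector $\pm\epsilon z$, the whole difficulty being to choose the signs (your phases $\phi_i$) consistently across cycles so that the tight stability constraints of value-$0$ edges dominated by one cycle edge at each endpoint are preserved. Your reduction to this point is essentially right (one small slip: tightness does \emph{not} rule out that both cycle edges at one endpoint of $f$ dominate $f$; that case is simply benign, since two consecutive edges of an even cycle receive opposite signs and hence impose no condition), and your treatment of chords --- splicing $f$ with an arc of $C_i$ between its two sandwiched endpoints to produce an odd cycle with cyclic preferences --- matches the paper's Case 2.

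The gap is exactly where you say you expect the main obstacle: proving that the signed graph of sandwiched connecting edges is balanced. Your plan --- take an unbalanced circuit, splice the $f_t$ with arcs of the intermediate cycles traversed in the cyclic-preference direction into an odd closed walk $W$, then ``extract a simple cycle that still carries cyclic preferences'' --- does not go through as sketched. When two consecutive connecting edges attach to the same vertex $w$ of an intermediate cycle, passing straight through $w$ needs $f_{t-1}\prec_w f_t$, which sandwiching does not give; your proposed repair of rerouting through the adjacent cycle in the opposite (``decreasing'') direction destroys the cyclic-preference property at every interior vertex of that arc (cyclic preferences require a consistent orientation of domination around the whole cycle), while detouring around the full cycle keeps the local conditions but makes $W$ non-simple, and shortcutting a non-simple closed walk at a repeated vertex does not preserve the domination condition there, so no simple odd cycle with cyclic preferences is obtained by generic extraction. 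This consistency step is the heart of the lemma: the paper devotes its Case 4 to it, arguing by a double induction (on the number of cycles in a component, and on the number of cycles met by a connecting path with linear preferences), with a relabeling of one cycle at a time, precisely so that parities are controlled without ever extracting a cycle from a non-simple walk. To complete your argument you would need either such an induction or an honest walk-to-cycle extraction lemma; neither is supplied, so as it stands the balance claim --- and with it the lemma --- is not proved.
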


By Theorem \ref{thm:ChenDing12}, integrality of $FSM(G,\prec)$ is equivalent to total dual integrality of $\pi(G,\prec)$, where $(G,\prec)$ is a simple preference system. A corollary follows directly.

\begin{corollary}
\label{cor:prf2}
Let $(G,\prec)$ be a simple preference system. If $(G,\prec)$ admits no odd cycles with cyclic preferences, then $\pi(G,\prec)$ is totally dual integral.
\end{corollary}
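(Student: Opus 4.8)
To prove the Corollary it suffices to establish Lemma~\ref{lem:prf1}: Theorem~\ref{thm:ChenDing12} states that, for a simple preference system, $\pi(G,\prec)$ is totally dual integral precisely when $SM(G,\prec)=FSM(G,\prec)$, and Lemma~\ref{lem:prf1} supplies this equality under the hypothesis that $(G,\prec)$ has no odd cycle with cyclic preferences. So the plan below is really a plan for Lemma~\ref{lem:prf1}.

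Since $SM(G,\prec)$ is the convex hull of the integral points of $FSM(G,\prec)$ and $FSM(G,\prec)$ is a polytope, it is enough to show that every vertex $x$ of $FSM(G,\prec)$ is integral. By Theorem~\ref{thm:AbelRoth94}, $x$ is $1/2$-integral and $E_{1/2}(x)$ is a union of vertex-disjoint cycles with cyclic preferences, each of which is even by hypothesis. Assume for contradiction that $E_{1/2}(x)\neq\emptyset$. I will build a nonzero vector $d\in\mathbb{R}^{E}$, supported on $E_{1/2}(x)$, with $x\pm\varepsilon d\in FSM(G,\prec)$ for all small $\varepsilon>0$, contradicting extremality. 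For each even cyclic-preference cycle $C\subseteq E_{1/2}(x)$ fix the alternating partition $C=C^{\circ}\cup C^{\bullet}$ of its edges together with a sign $s_{C}\in\{+1,-1\}$ to be chosen, and let $d$ take the value $+s_{C}$ on $C^{\circ}$ and $-s_{C}$ on $C^{\bullet}$. Nonnegativity is clear, since the only coordinates that move start at $1/2$; and at every vertex the two incident cycle edges lie in different color classes, so $d(\delta(v))=0$ and the constraints~(\ref{constraints:2}) are preserved, the tight ones included (note that every vertex of a cycle in $E_{1/2}(x)$ is saturated, carrying weight $1/2$ on each of its two cycle edges). It remains to choose the signs so that $d(\varphi(e))=0$ whenever $x(\varphi(e))=1$, for then the constraints~(\ref{constraints:1}) are preserved as well.

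The sign choice is the heart of the matter. For a tight edge $e$ I would analyze $\varphi(e)\cap E_{1/2}(x)$, using repeatedly that a cycle vertex carries positive weight only on its two cycle edges, and splitting into cases according to how $e$ meets the cycles. Whenever $\varphi(e)$ meets each cycle it touches in a union of pairs of consecutive cycle edges, $d(\varphi(e))=0$ for every choice of signs. The delicate cases are: (i) $e$ joins two cycles $C$ and $C'$ and $\varphi(e)$ contains exactly one edge of each, which forces $s_{C}s_{C'}$ to equal a value in $\{+1,-1\}$ prescribed by the configuration; and (ii) $e$ is a chord of a single cycle $C$ with $\varphi(e)$ containing exactly two edges of $C$, in which case $e$ cuts $C$ into two cycles that again carry cyclic preferences, and $d(\varphi(e))\neq 0$ would force both of these to be odd, which the hypothesis forbids, so $d(\varphi(e))=0$ automatically. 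The conditions coming from case~(i) amount to a balancedness requirement on an auxiliary signed graph whose nodes are the cycles of $E_{1/2}(x)$ and whose edges are the joining edges; the key claim is that a violating closed walk in this graph can be spliced, along arcs of the participating cyclic-preference cycles and across the joining edges, into a single odd cycle with cyclic preferences in $(G,\prec)$, contradicting the hypothesis. Hence the signs can be chosen, $x\pm\varepsilon d\in FSM(G,\prec)$ for small $\varepsilon$, $x$ is not a vertex, and therefore $E_{1/2}(x)=\emptyset$ and $x$ is integral.

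I expect the main obstacle to be the bookkeeping in cases~(i) and~(ii): pinning down, as a function of where $e$ sits in a given vertex's preference order, which of that vertex's two cycle edges lies in $\varphi(e)$; tracking how this interacts with the alternating partition of the cycle; and verifying that the splicing of a frustrated walk really yields a genuine odd cycle with cyclic preferences (the arcs used being internally disjoint, and the cyclic-preference alternation being maintained across each joining edge) rather than merely a closed walk. Once that combinatorial core is in place, the feasibility of $x\pm\varepsilon d$ and the resulting contradiction are routine.
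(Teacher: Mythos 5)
Your proposal is correct and takes essentially the same route as the paper: the corollary is reduced to Lemma \ref{lem:prf1} via Theorem \ref{thm:ChenDing12}, and the lemma is proved by perturbing a $1/2$-integral vertex of $FSM(G,\prec)$ by an alternating $\pm 1$ vector on the even cyclic-preference cycles of $E_{1/2}(x)$ (Theorem \ref{thm:AbelRoth94}), with a per-cycle sign choice (the paper's relabeling of cycles) and a contradiction obtained by splicing any bad configuration into an odd cycle with cyclic preferences, matching the paper's case analysis of tight stability constraints. The only slight imprecision is your chord case: the two cycles formed by the chord and the arcs of $C$ need not both carry cyclic preferences a priori; the paper instead uses tightness of $x(\varphi(e))=1$ to pin down which cycle edges dominate $e$ and only then exhibits the forbidden odd cyclic-preference cycle, arriving at the same automatic cancellation you assert.
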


\begin{proof}[Proof of Lemma \ref{lem:prf1}]
By Theorem \ref{thm:AbelRoth94}, $FSM(G,\prec)$ is $1/2$-integral as $(G,\prec)$ is a simple preference system.
Let $x$ be a $1/2$-integral point in $FSM(G, \prec)$.
Since $(G,\prec)$ admits no odd cycles with cyclic preferences, $E_{1/2}(x)$ consists of even cycles $C_1,C_2,\ldots,C_r$ with cyclic preferences. For $i=1,2,\ldots,r$, label vertices and edges of $C_i\in E_{1/2}(x)$ such that $C_i=v^i_1v^i_2\ldots v^i_{l}$ and $e^i_k\prec_{v^i_{k+1}} e^i_{k+1}$ for $k=1,2,\ldots,l$, where $e^i_k=v^i_{k}v^i_{k+1}$ and indices are taken modulo $l$.
We remark that the parity of vertices and edges refers to the parity of their indices. Define $z\in \mathbb{R}^{E(G)}$ by
\begin{equation*}
z(e):=
\begin{cases}
1 & e\text{ is an even edge in }C \in E_{1/2}(x),\\
-1 & e\text{ is an odd edge in }C \in E_{1/2}(x),\\
0 & \text{otherwise}.
\end{cases}
\end{equation*}
We are going to exclude $x$ from vertices of $FSM(G, \prec)$ by adding perturbation $\epsilon z$ for small $\epsilon$ to $x$ and showing that $x\pm\epsilon z\in FSM(G,\prec)$.
Tight constraints in (\ref{stability constraints})-(\ref{edge nonnegativity}) under perturbation $\epsilon z$ play a key role here. Observe that tight constraints in (\ref{matching constraints}) and (\ref{edge nonnegativity}) are invariant under perturbation $\epsilon z$. It remains to show that perturbation $\epsilon z$ does not affect tight constraints in (\ref{stability constraints}) either. Let $e$ be an edge with $x(\varphi(e))=1$. Clearly, $\lvert \varphi(e)\cap E_+(x)\rvert \in\{1,2\}$. When $\lvert \varphi(e)\cap E_+(x)\rvert=1$, $x(e)=1$ follows, which is trivial. When $\lvert \varphi(e)\cap E_+(x)\rvert =2$, we claim that the parity of dominating edges in $E_{1/2}(x)$ of $e$ does not agree (relabeling vertices and edges in $E_{1/2}(x)$ if necessary).
Hence corresponding tight constraints in (\ref{stability constraints}) are also invariant under perturbation $\epsilon z$.
To justify this claim, we distinguish four cases.

\textbf{Case 1.} Edge $e$ is an edge from some $C\in E_{1/2}(x)$. This case is trivial since $C$ admits cyclic preferences.

\begin{figure}
\centering
\begin{subfigure}{.4\textwidth}
  \centering
  \includegraphics[width=.85\linewidth]{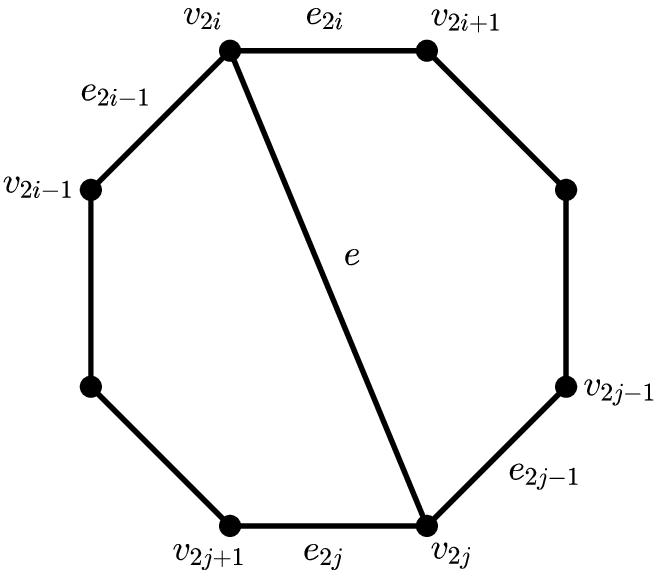}
  \caption{}
  \label{fig1a}
\end{subfigure}%
\begin{subfigure}{.4\textwidth}
  \centering
  \includegraphics[width=.845\linewidth]{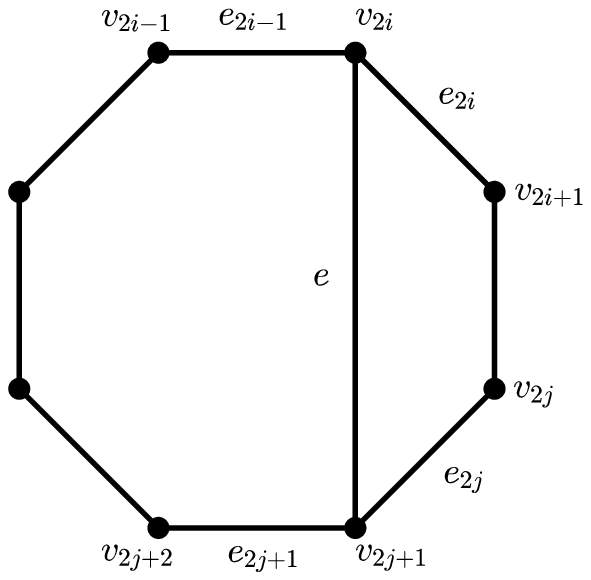}
  \caption{}
  \label{fig1b}
\end{subfigure}
\caption{Case 2}
\end{figure}
\textbf{Case 2.} Edge $e$ is a chord in some $C\in E_{1/2}(x)$. We first show that endpoints of $e$ have different parity in $C$. We prove it by contradiction. Without loss of generality, let $e=v_{2i}v_{2j}$.

If $e_{2i}\prec e$, then $e_{2i-1}\prec e$. Since $x(\varphi(e))=1$, it follows that $e\prec e_{2j-1}$ and $e\prec e_{2j}$. However, $v_{2i} e v_{2j} e_{2j} v_{2j+1} \ldots v_{2i-1} e_{2i-1} v_{2i}$ form an odd cycle with cyclic preferences, a contradiction. Hence $e\prec e_{2i}$.

Similarly, if $e_{2j}\prec e$, then $e_{2j-1}\prec e$. Equality $x(\varphi(e))=1$ implies that $e\prec e_{2i}$ and $e\prec e_{2i-1}$. However, $v_{2i} e_{2i} v_{2i+1} \ldots v_{2j-1} e_{2j-1} v_{2j} e v_{2i}$ form an odd cycle with cyclic preferences, a contradiction. Hence $e\prec e_{2j}$.

Now $e\prec e_{2i}$ and $e\prec e_{2j}$, it follows that $e_{2i-1}\prec e$ and $e_{2j-1}\prec e$ since $x(\varphi(e))=1$. But in this case two odd cycles with cyclic preferences mentioned above occur at the same time.
Therefore, endpoints of $e$ have different parity in $C$. Hence let $e=v_{2i} v_{2j+1}$.
If $e_{2i}\prec e$ (\textit{resp.} $e_{2j+1}\prec e$), it follows that $e_{2i-1}\prec e$ (\textit{resp.} $e_{2j}\prec e$). Then $e$ is dominated by two consecutive edges from $C$, which is trivial.  So assume that $e\prec e_{2i}$ and $e\prec e_{2j+1}$. Since $x(\varphi(e))=1$, it follows that $e_{2i+1}\prec e$ and $e_{2j}\prec e$. Therefore e is dominated by two edges with different parity.

\textbf{Case 3.} Edge $e$ is a hanging edge of some $C\in E_{1/2}(x)$ and dominated by two edges from $C$. This case is trivial.

\begin{figure}
\centering
\includegraphics[scale=0.85]{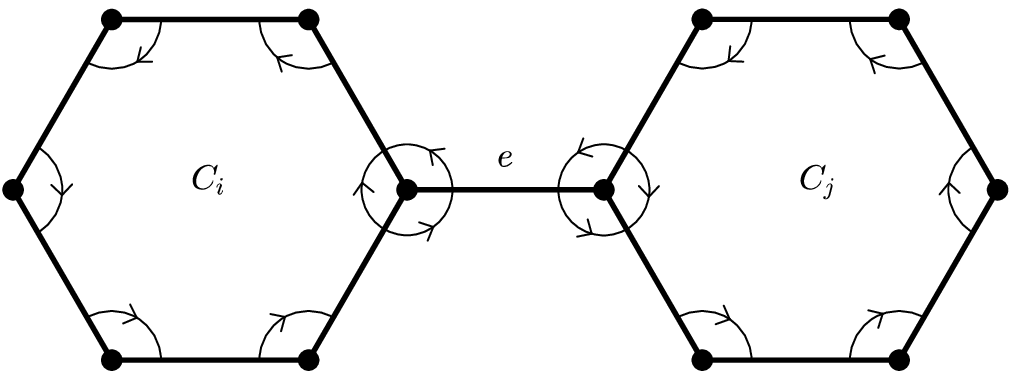}
\caption{Case 4}
\end{figure}
\textbf{Case 4.} Edge $e$ is a connecting edge between $C_i$ and $C_j$ and dominated by one edge from $C_i$ and one edge from $C_j$ respectively, where $C_i, C_j \in E_{1/2}(x)$.
For $k=1,2,\ldots,r$, let $F_k$ be the subset of edges in this case and incident to $C_k$.
Then $\cup_{i=1}^{i=r} F_i\cup C_i$ induces a subgraph of $G$. It suffices to work on a component of the induced subgraph. We apply induction on the number $\alpha$ of cycles from $E_{1/2}(x)$ in a component.

When $\alpha=1$, it is trivial.
Hence assume the claim holds for components with $\alpha\geq 1$ cycles from $E_{1/2}(x)$. We consider a component with $\alpha +1$ cycles $C_1, \ldots, C_{\alpha}, C_{\alpha+1}$ from $E_{1/2}(x)$.
Without loss of generality, assume that deleting $C_{\alpha+1}$ yields a new component with $\alpha$ cycles.
By induction hypothesis, the claim holds for the resulting component.
It remains to check edges in $F_{\alpha+1}$. If there exists an edge in $F_{\alpha+1}$ violating the claim, relabel vertices and edges in $C_{\alpha+1}$. After at most one relabeling, all edges in $F_{\alpha+1}$ satisfy the claim. We prove it by contradiction. Let $f_1,f_2 \in F_{\alpha +1}$ be edges such that $f_1$ satisfies the claim but $f_2$ violates the claim. For $i=1,2$, let $f_i=u_i w_i$, where $u_i$ is the endpoint in the resulting component and $w_i$ is the endpoint in $C_{\alpha+1}$. By assumption, $u_1$ and $w_1$ have different parity and $u_2$ and $w_2$ have the same parity.
Analogous to the definition of cycles with cyclic preferences, we call path $P=v_1 v_2 \ldots v_l$ a \textit{$v_1 v_l$-path with linear preferences} if $v_iv_{i+1}\prec_{v_{i+1}}v_{i+1}v_{i+2}$ for $i=1,2,\ldots,l-2$. Clearly, for any two vertices in the same component, there exists a path with linear preferences between them.
Hence there exist a $u_1 u_2$-path $P_\alpha$ and a $w_2 w_1$-path $P_{\alpha+1}$, both of which admit linear preferences.
Moreover, $u_1 P_\alpha u_2 f_2 w_2 P_{\alpha+1} w_1 f_1 u_1$ form a cycle with cyclic preferences. We justify this cycle is odd by
showing that the $u_1 u_2$-path $P_\alpha$ is even (\textit{resp.} odd) if $u_1$ and $u_2$ have the same (\textit{resp.} different) parity.

If $u_1$ and $u_2$ belong to the same cycle from $E_{1/2}(x)$, it is trivial. Hence assume $u_1\in C_s$ and $u_2\in C_t$, where $s,t\in \{1,2,\ldots,\alpha\}$ and $s\not=t$. We apply induction on the number $\tau$ of cycles from $E_{1/2}(x)$ involved in $P_\alpha$. Clearly, $\tau\geq 2$.
When $\tau=2$. Take $v^s v^t\in F_s\cap F_t$ on $P_\alpha$.
Let $P_s$ be the part of $P_\alpha$ from $u_1$ to $v^s$ in $C_s$ and $P_t$ be the part of $P_\alpha$ from $v^t$ to $u_2$ in $C_t$.
It follows that $u_1 P_s v^s v^t P_t u_2$ form $P_\alpha$.
By primary induction hypothesis, $v^s$ and $v^t$ have different parity since $v^s v^t\in F_s\cap F_t$.
If $u_1$ and $u_2$ have the same parity, then $P_s$ and $P_t$ have different parity, implying that $P_\alpha$ is even; if $u_1$ and $u_2$ have different parity, then $P_s$ and $P_t$ have the same parity, implying that $P_\alpha$ is odd.
Now assume $\tau\geq 2$. Let $C_{k_1},\ldots,C_{k_\tau},C_{k_{\tau+1}}$ be cycles from $E_{1/2}(x)$ involved along $P_\alpha$.
Take $v^{k_{\tau}} v^{k_{\tau+1}} \in F_{k_{\tau}}\cap F_{k_{\tau+1}}$ on $P_\alpha$.
Let $P_{s, k_{\tau}}$ denote the part of $P_\alpha$ from $u_1$ to $v^{k_{\tau}}$ and $P_{k_{\tau},t}$ denote the part of $P_\alpha$ from $v^{k_{\tau}}$ to $u_2$.
Clearly, $P_\alpha=u_1 P_{s,k_\tau} v^{k_\tau} P_{k_\tau, t} u_2$. Since $P_{s, k_{\tau}}$ involves $\tau$ cycles and $P_{k_{\tau},t}$ involves two cycles, both length depend on the parity of endpoints. It follows that $P_\alpha$ is even when $u_1$ and $u_2$ have the same parity, and $P_\alpha$ is odd when $u_1$ and $u_2$ have different parity.

Hence when $u_1$ and $u_2$ have the same parity, $w_1$ and $w_2$ have different parity, implying that $P_\alpha$ is even and $P_{\alpha+1}$ is odd;
when $u_1$ and $u_2$ have different parity, $w_1$ and $w_2$ have the same parity, implying that $P_\alpha$ is odd and $P_{\alpha+1}$ is even.
Either case yields an odd cycle with cyclic preferences, a contradiction.

Therefore $1/2$-integral points are not vertices of $FSM(G,\prec)$ as they can be perturbed by $\epsilon z$ for small $\epsilon$ without leaving $FSM(G,\prec)$ . By Theorem \ref{thm:AbelRoth94}, $SM(G,\prec)=FSM(G,\prec)$ follows.
\end{proof}

Now we are ready to present a proof of our main theorem.

\begin{proof}[Proof of Theorem \ref{thm:main}]
It suffices to show the equivalence of $(i)$, $(iii)$ and $(iv)$. Let $D$ be a good orientation of line multigraph $L(H)$.
Construct preference system $(H,\prec)$ from $D$, and construct simple preference system $(H^\prime,\prec^\prime)$ from $(H,\prec)$ by substituting each parallel edge with a gadget as in Figure \ref{gadget}.
By the construction, $(H^\prime,\prec^\prime)$ admits no odd cycles with cyclic preferences.
Now $\sigma(D)$ can be viewed as a linear system defined on preference system $(H,\prec)$ and consisting of constraints (\ref{constraints:1})-(\ref{constraints:5}).
Observe that constraints (\ref{constraints:1}), (\ref{constraints:2}) and $(\ref{constraints:5})$ form the Rothblum system $\pi(H,\prec)$, and constraints (\ref{constraints:3})-(\ref{constraints:4}) are redundant with respect to $\pi(H,\prec)$.
Hence $FK(D)=FSM(H,\prec)$ follows.

By Lemma \ref{lem:prf1}, $FSM(H^\prime,\prec^\prime)$ is integral.
Integrality of $FSM(H,\prec)$ follows from Lemma \ref{lem:reduct1}, implying $FK(D)$ is integral.
Similar arguments apply to any induced subdigraphs of $D$. Hence $(i)\implies (iii)$.

By Corollary \ref{cor:prf2}, $\pi(H^\prime,\prec^\prime)$ is TDI.
Total dual integrality of $\pi(H,\prec)$ follows from Lemma \ref{lem:reduct2}.
Since $\pi(H,\prec)$ is part of $\sigma(D)$ and the other constraints (\ref{constraints:3})-(\ref{constraints:4}) are redundant in $\sigma(D)$ with respect to $\pi(H,\prec)$, total dual integrality of $\sigma(D)$ follows. Similar arguments apply to any induced subdigraphs of $D$. Hence $(iii)\implies (iv)$.

By a theorem of Edmonds and Giles \cite{EdmoGile77}, implication $(iv)\implies (iii)$ follows directly.

To prove implication $(iii)\implies (i)$, we assume the contrary.
Observe that $D$ being kernel ideal implies the existence of kernels for any induced subdigraphs of $D$.
Let $D$ be a digraph such that $D$ is kernel ideal but not good.
Then there either exists a clique containing directed cycles or exists a directed odd cycle without (pseudo-)chords.
We show that neither case is possible.
If $D$ has a clique containing directed cycles, we consider the subdigraph induced on this clique.
There is no kernel for this induced subdigraph, a contradiction.
If $D$ contains a directed odd cycle without (pseudo-)chords, we restrict ourselves to the subdigraph induced on this directed odd cycle.
There is no kernel for this induced subdigraph either, a contradiction.
\end{proof}

\section{Discussions}
It is natural to consider whether our result applies to superorientations of line multigraph.
However, the connection between kernels and stable matchings seems to be broken by superorientations.
Indeed, consider the digraph $D=(V,A)$, where $V=\{1,2,3,4\}$ and $A=\{(1,2),(2,1),(3,1),(3,4),(4,2)\}$.
Clearly, $D$ is a good superorientation of a $4$-cycle which is a line graph.
However we cannot construct a preference system with strict linear preferences from $D$ such that kernels in $D$ correspond to stable matchings in the preference system.
Besides, the fractional kernel polytope $FK(D)$ has three vertices $(1,0,0,1)$, $(1/2,1/2,0,1/2)$ and $(0,1,1,0)$ including a fractional one.

\section*{Acknowledgments}
The author is grateful to Prof. Wenan Zang for his valuable suggestions.

\bibliographystyle{siam}
\bibliography{KernelMengerian}
\nocite{Schr86}

\end{document}